
\documentclass[letterpaper, 10 pt, conference]{ieeeconf}  

\IEEEoverridecommandlockouts                              

\overrideIEEEmargins                                      

\pdfminorversion=4


\usepackage{graphics} 
\usepackage{epsfig} 
\usepackage{mathptmx} 
\usepackage{amsmath} 
\usepackage{amssymb}  
\usepackage{algorithm}
\usepackage{algorithmic}

\usepackage{cite}
\usepackage[dvipsnames]{xcolor}

\usepackage[english]{babel}
\usepackage{dsfont}
\usepackage{hyperref}
\hypersetup{
    colorlinks=true,
    linkcolor=blue,
    filecolor=magenta,  
    urlcolor=cyan
    }
    
\newtheorem{theo}{Theorem}

\newtheorem{lemma}{Lemma}

\newtheorem{coro}{Corollary}

\newtheorem{remark}{Remark}

\title{\LARGE \bf
Computation of Maximal Admissible Robust Positive Invariant Sets for Linear Systems with Parametric and Additive Uncertainties
}

\author{Anchita Dey* and Shubhendu Bhasin
\thanks{
Anchita Dey is supported by the Prime Minister's Research Fellow Scheme under the Ministry of Education, Government of India.}
\thanks{Anchita Dey and Shubhendu Bhasin  are with the Department of Electrical Engineering, Indian Institute of Technology Delhi, Hauz Khas, New Delhi, Delhi 110016, India {\tt\small anchita.dey@ee.iitd.ac.in, sbhasin@ee.iitd.ac.in.}}
\thanks{*Corresponding author.}
}

\begin{document}

\maketitle
\thispagestyle{empty}
\pagestyle{empty}

\begin{abstract}
In this paper, we {{address the problem of}} computing the maximal admissible robust positive invariant (MARPI) set for discrete-time linear time-varying systems with parametric uncertainties and additive disturbances. The system state and input are subjected to hard constraints, and the system parameters and the exogenous disturbance are assumed to belong to known{{ convex polytopes}}. {{We provide necessary and sufficient conditions for the existence of the non-empty MARPI set, and explore relevant features of the set that lead to an efficient finite-time converging algorithm with a suitable stopping criterion. The analysis hinges on backward reachable sets defined using recursively computed halfspaces and the minimal RPI set.}} A numerical example is used to validate the theoretical development.
\end{abstract}


\section{Introduction}
Admissible and/or invariant sets of dynamical systems can provide valuable insights into their stability and convergence properties, as well as assist in the design of suitable controllers. A set of initial states is said to be positively invariant \cite{blanchini1999set,blanchini2008set} to a given dynamics if the subsequently evolved states belong to the same set. If this property is achieved despite the presence of uncertainties in the system model, then the set is said to be robust positive invariant (RPI). In most practical applications, the system state, input, output or their combination is subjected to constraints, thus motivating the development of maximal constraint admissible sets \cite{gilbert1991linear}, the largest possible set of initial states that evolve following the given dynamics while satisfying the imposed constraints. 

Existing literature has extensively dealt with RPI and maximal admissible sets for both linear and nonlinear systems, primarily in discrete-time setting. Some interesting applications include controller design for humanoid robots and systems in specialized Lie groups, design of control barrier functions, guaranteeing feasibility in model predictive control (MPC), etc., (see \cite{yamamoto2015maximal,weiss2014spacecraft,freire2023systematic,danielsony2023constraint, kerrigan2000invariant,pluymers2005interpolation}). One of the earliest theoretical works \cite{gilbert1991linear} details various properties of a maximal output admissible (MOA) set for an autonomous linear time-invariant (LTI) system along with an algorithm for computing the same.{{ A known non-autonomous system is considered in [3] that transforms to an autonomous system on application of state feedback control.}} The concept in \cite{gilbert1991linear} is extended to MOA sets for discrete-time LTI systems with bounded exogenous disturbance in \cite{kolmanovsky1995maximal}, and to linear time-varying (LTV) systems with polytopic uncertainty in the parameters in \cite{pluymers2005efficient}. The procedure for construction of MOA sets for nonlinear systems with exact model knowledge is provided in \cite{hirata2008exact,bravo2005computation,rachik2002maximal}, and that for bilinear systems in \cite{benfatah2021maximal}. The works in \cite{hatanaka2008probabilistic} and \cite{hatanaka2008computations} focus on MOA sets that satisfy output constraints with certain probability for time-invariant and time-varying parametric uncertainties, respectively. A few works \cite{kouramas2005minimal,blanchini1994ultimate,hirata2004maximal,kerrigan2001robust,casavola2000robust} describe constraint admissible RPI sets for linear systems with both parametric and additive uncertainties. 

In this paper, we deal with the maximal admissible RPI (MARPI) set for LTV systems with {{polytopic}} parametric uncertainties and {{polytopic}} additive disturbances. {{The LTV system is transformed to an autonomous system using state feedback control with a quadratically stabilizing feedback gain \cite{khargonekar1990robust,de1999new}. The system state and input are subjected to polytopic hard constraints.}} Unlike the sets introduced and constructed in \cite{gilbert1991linear,yamamoto2015maximal,weiss2014spacecraft,freire2023systematic,danielsony2023constraint, kerrigan2000invariant,pluymers2005interpolation,kolmanovsky1995maximal,pluymers2005efficient,hatanaka2008computations,hatanaka2008probabilistic,benfatah2021maximal,hirata2008exact,bravo2005computation,rachik2002maximal} that are positively invariant to known dynamics, {{or RPI to either parametric uncertainty or external disturbances, the proposed MARPI set is invariant to both parametric and additive uncertainties.}} This work, inspired from \cite{kolmanovsky1995maximal} and \cite{pluymers2005efficient}, seeks to rigorously analyse properties of the MARPI set that are required to design and implement a tractable algorithm. Theoretically, the computation of an MARPI set involves infinite intersections of $k$-step backward reachable sets where $k$ varies from zero to infinity \cite{kolmanovsky1995maximal,pluymers2005efficient}. The proposed algorithm is similar in spirit to the standard method of constructing MOA sets in \cite{gilbert1991linear,kolmanovsky1995maximal,pluymers2005efficient}. However, the main challenges {{that this work addresses include construction of}} the backward reachable sets, necessary and sufficient conditions for the MARPI set to be non-empty, showing that the set is obtainable with finite intersections, and proving that a stopping criterion exists for the algorithm. 

Due to the presence of both parametric and additive uncertainties, it is required to consider all possible combinations of additive disturbance for every possible value of the uncertain parameters in defining the halfspaces that form the backward reachable sets. Carrying out theoretical analysis using such set definitions becomes cumbersome. This is overcome by {{strategically defining the halfspaces in a recursive fashion}} using the vertices of the parametric uncertainty set, support functions of the disturbance set and Kronecker product. {{Different from \cite{pluymers2005efficient}, the additional challenge in this work is that of non-existence of the MARPI set due to the presence of disturbance. It is, therefore, prudent to check the existence of the MARPI set before attempting to compute it using the algorithm.}} We state necessary and sufficient conditions to check the existence of the non-empty MARPI set. {{A similar situation arises in \cite{kolmanovsky1995maximal} that considers additive disturbance only, where the minimal RPI (mRPI) set is used to obtain the existence condition. Owing to the absence of parametric uncertainty, the mRPI set in \cite{kolmanovsky1995maximal} is convex. For the case in question, where both parametric and additive uncertainties are considered, the mRPI set is a union of infinite convex sets, and the union may be non-convex. Consequently, the results in \cite{kolmanovsky1995maximal,pluymers2005efficient} do not trivially extend to the case of parametric and additive uncertainties.}} Constraint admissible RPI sets for systems with both types of uncertainties have also been addressed in \cite{kouramas2005minimal,blanchini1994ultimate,hirata2004maximal,kerrigan2001robust,casavola2000robust}. An inclusion-based mRPI set is constructed in \cite{kouramas2005minimal} whereas \cite{blanchini1994ultimate,hirata2004maximal,kerrigan2001robust,casavola2000robust} deal with MARPI sets. In \cite{blanchini1994ultimate}, the theory and computation of maximal $\lambda$-contractive set (with $0\leq \lambda <1$) is discussed and used to design a suitable feedback control. For linear systems, the designed feedback gain in \cite{blanchini1994ultimate} depends on set-induced Lyapunov function, and is therefore, not fixed for all points in the contractive set. Contrary to this, we construct the MARPI set for a given quadratically stabilizing feedback gain without considering the restrictive property of contraction. In \cite{hirata2004maximal}, a Schur stable nominal system along with small admissible perturbations as feedback that consist of stable, time-varying, memoryless and possibly nonlinear operators is considered. The parametric uncertainty is modeled using the operator assumed to be norm-bounded by one. Chapter 3 in \cite{kerrigan2001robust} lays down the theory of computing MARPI sets without providing a detailed algorithm or other associated conditions for obtaining the set. In \cite{casavola2000robust}, a robust command governor is designed based on the computation of the MARPI set. 

{{In this paper, we focus on polytopic parametric and additive uncertainties, and show in detail that the existence condition of the non-empty MARPI set can be obtained using the computable convex hull of the mRPI set \cite{kouramas2005minimal}. Provided the MARPI set is non-empty, we prove that it can be obtained with finite intersections of the suitably defined backward reachable sets. Thereafter, we show the existence of a stopping criterion leading to a tractable algorithm.}}

\textit{Notations and Definitions: }$||\cdot||_2$ and $||\cdot||_\infty$ represent $2$ and $\infty$-norms of a vector (or induced $2$ and $\infty$-norms of a matrix), respectively. {{$\emptyset$ is the empty set. The}} Minkowski sum of two sets $\mathbf{P}$ and $\mathbf{Q}$ is denoted by $\mathbf{P}\oplus\mathbf{Q}\triangleq\{p+q\;|\;p\in\mathbf{P},\;q\in\mathbf{Q}\}$. The set operations {{$\mathbf{P}\backslash \mathbf{Q}\triangleq\{p \;|\;p\in\mathbf{P},\;p\not\in\mathbf{Q}\}$}} and $V\mathbf{Q}\triangleq\{ Vq\;|\;q\in\mathbf{Q}\}$ where $V$ is a matrix. 
$0_{p}\in\mathbb{R}^{p}$ is the zero vector, $\mathds{1}_q$ is a vector in $\mathbb{R}^q$ with each element equal to $1$, and $\otimes$ denotes Kronecker product. $\mathbb{I}_p^q=\left\{p,\;p+1,\;...,\;q-1,\;q\right\}$ with integers $p$ and $q>p$. {{The}} convex hull of the set of elements $p_1,\;p_2,\;...$ is denoted by $\text{conv}\left(\{p_1,\; p_2,\;...\}\right)$. For any $p\in\mathbb{R}$, $\lfloor p\rfloor$ returns the greatest integer less than or equal to $p$. The symbol $V(i)$ denotes the $i^\text{th}$ row of a matrix/vector $V$. For two vectors $p,\;q\in\mathbb{R}^w$, $p\leq q$ or $q\geq p$ implies $p(i)\leq q(i)$ $\forall i\in\mathbb{I}_1^w$. The support function of the set $\mathbf{P}$ evaluated at the vector $q$ is represented by 
    $h_\mathbf{P}\left(q\right)\triangleq \sup_{p\in\mathbf{P}} \;\left(q^\intercal p \right)$
where `$\sup$' denotes supremum. The function $\zeta_\mathbf{P} (V)$ maps the matrix $V\in\mathbb{R}^{s\times w}$ to a vector of the support functions of $\mathbf{P}$ evaluated at the transpose of each row of $V$, i.e.,
\begin{align}\label{zetaP}\zeta_\mathbf{P} (V)=\begin{bmatrix}
    h_\mathbf{P}\left(q_1^\intercal\right) & h_\mathbf{P}\left(q_2^\intercal\right)&...&h_\mathbf{P}\left(q_s^\intercal\right)
\end{bmatrix}^\intercal\in\mathbb{R}^s\end{align}where $q_1,\; q_2,\;...,\;q_s\in\mathbb{R}^{{1\times w}}$ are the $s$ consecutive rows in $V$.
\section{Problem Formulation}\label{SecII}
Consider the following discrete-time LTV system
\begin{align}
    &x_{t+1}=A_tx_t+B_tu_t+d_t\;\;\;\;\forall t\in\mathbb{I}_0^\infty{{,}} \label{systemeqn}\\
    &x_t\in\mathbf{X}\text{ and }u_t\in\mathbf{U}{{,}}\label{systemconstraints}
\end{align}
where $x_t\in\mathbb{R}^n$ is the state, $u_t\in\mathbb{R}^m$ is the input, $A_t\in\mathbb{R}^{n\times n}$, $B_t\in\mathbb{R}^{n\times m}$ are system parameters, $d_t\in\mathbb{R}^n$ is an additive disturbance and \eqref{systemconstraints} are hard constraints on the system state and input. We define an augmented parameter $\psi_t\triangleq \begin{bmatrix}
    A_t&B_t
\end{bmatrix}\in\mathbb{R}^{n\times(n+m)}$. The parameter $\psi_t$ and the additive disturbance $d_t$ are unknown but belong to the sets $\Psi$ and $\mathbf{D}$, respectively.
For all the elements in $\Psi$ of the form $\begin{bmatrix}
    \bar{A}&\bar{B}
\end{bmatrix}$ where $\bar{A}\in\mathbb{R}^{n\times n}$ and $\bar{B}\in\mathbb{R}^{n\times m}$, there exists a common gain $K\in\mathbb{R}^{m\times n}$ such that $\left(\bar{A}+\bar{B}K\right)$ is Schur stable. In other words, any system $z_{t+1}=\left(\bar{A}+\bar{B}K\right)z_t$, where $\begin{bmatrix}
    \bar{A}&\bar{B}
\end{bmatrix}\in\Psi$, is quadratically stabilizable using the feedback gain $K$ \cite{khargonekar1990robust,de1999new}. The sets $\mathbf{X}$, $\mathbf{U}$, $\mathbf{D}$ and $\Psi$ are known convex polytopes, each containing its respective origin.

The \emph{objective} is to find the {{MARPI}} set $\mathbf{S}\subseteq \mathbf{X}$ that satisfies $A_t x_t+B_tu_t+d_t\in \mathbf{S}$ $\forall (x_t,\;\begin{bmatrix}
    A_t&B_t
\end{bmatrix},\;d_t)\in\mathbf{S}\times \Psi \times \mathbf{D}$ where $u_t=Kx_t\in\mathbf{U}$. {{The input $u_t=Kx_t$ transforms the non-autonomous system in \eqref{systemeqn} to an autonomous system with exogenous disturbance.}} Here, `admissible' refers to the satisfaction of \eqref{systemconstraints} whereas `robustness' is with respect to the uncertain parameters $A_t$, $B_t$, and the external disturbance $d_t$.
\section{Reformulation of the objective}
The polytopes $\mathbf{X},\;\mathbf{U}$ and $\mathbf{D}$ can be represented using their halfspaces, and $\Psi$ using its vertices as follows:
\begin{align}
    &\mathbf{X}\triangleq\left\{ x \;|\; F_x x \leq f_x \;,\;F_x\in\mathbb{R}^{n_x\times n},\; f_x\in\mathbb{R}^{n_x} \right\}\subseteq\mathbb{R}^n{{,}} \label{setX}\\
    &\mathbf{U}\triangleq\left\{ u \;|\; F_u u \leq f_u \;,\;F_u\in\mathbb{R}^{n_u\times m},\; f_u\in\mathbb{R}^{n_u} \right\} \subseteq\mathbb{R}^m {{,}} \label{setU}\\
    &\mathbf{D}\triangleq\left\{ d \;|\; F_d d \leq f_d \;,\;F_d\in\mathbb{R}^{n_d\times n},\; f_d\in\mathbb{R}^{n_d} \right\} \subseteq\mathbb{R}^n, \label{setD}\\
    &\Psi \triangleq \text{conv} \left( \left\{\psi^{[1]},\;\psi^{[2]},\;...,\;\psi^{[L]}  \right\}\right)\subseteq \mathbb{R}^{n\times (n+m)}, \label{setPsi}
\end{align}
where the vertex $\psi^{[i]}=\begin{bmatrix}
    A^{[i]}&B^{[i]}
\end{bmatrix}$ $\forall i\in\mathbb{I}_1^L$, $L$ is finite, and $F_x,\;f_x,\;F_u,\;f_u,\;F_d,\;f_d$ and $\psi^{[i]}$ $\forall i\in\mathbb{I}_1^L$ are known. Using the feedback gain $K$, two more polytopic sets are defined below:
\begin{align}
&{\mathbf{S}_0}\triangleq   \left\{ x\;\Big{|}\;F_0 x \leq f_0, \; F_0\triangleq\begin{bmatrix}
    F_x\\F_uK
\end{bmatrix} ,\;f_0\triangleq \begin{bmatrix}f_x\\f_u
\end{bmatrix}  \right\} \subseteq\mathbb{R}^n,\label{setZ}\\
&\Phi\triangleq \text{conv}\left(\left\{ \phi^{[1]},\;\phi^{[2]},...,\;\phi^{[L]}\right\} \right)\subseteq \mathbb{R}^{n\times n} \label{setPhi}\\
&\;\;\;\;\;\;\;\;\;\;\;\;\text{ with }\phi^{[i]}\triangleq\left(A^{[i]}+B^{[i]}K \right)\;\;\forall i\in\mathbb{I}_1^L, \nonumber
\end{align}
where each element in $\Phi$ is Schur stable since $K$ is a quadratically stabilizing feedback gain. By definition \eqref{setZ}, the elements of $\mathbf{S}_0$ satisfy \eqref{systemconstraints}, i.e., $x\in\mathbf{X}$ and $Kx\in\mathbf{U}$ $\forall x\in\mathbf{S}_0$. Therefore, our \emph{objective} can be modified to finding the MARPI set $\mathbf{S}$, where $\mathbf{S}\subseteq \mathbf{S}_0\subseteq\mathbf{X}$, {{for the dynamics  
\begin{gather}\begin{aligned}\label{invdyn}
    &x_{t+1}=\phi_t x_t+d_t\;\;\;\;\forall t\in  \mathbb{I}_0^\infty,    \\
   & x_t\in\mathbf{S}_0, \;\phi_t\triangleq \left( A_t+B_tK\right)\in\Phi\text{ and }d_t\in\mathbf{D},
    \end{aligned}\end{gather}
i.e., $x_t\in\mathbf{S}\Rightarrow x_{t+1}\in\mathbf{S}$ following \eqref{invdyn}.}} Redefining the objective in the above manner circumvents the harder task of finding the MARPI set $\mathbf{S}$ in $\mathbf{X}$ for which $K\mathbf{S}\subseteq\mathbf{U}$.

\section{Construction of the MARPI Set}

{{Consider a set ${{\mathbf{S}_1}}\triangleq\{x\;|\;\phi_{t_0} x+d\in\mathbf{S}_0,\;\;\forall (\phi_{t_0},\;d)\in\Phi\times \mathbf{D}\}=\{ x \;|\; F_0\phi_{t_0} x+F_0d\leq f_0,\;\;\forall (\phi_{t_0},\;d)\in\Phi\times \mathbf{D}\}=\{ x\;|\; {F_0\phi_{t_0}}x\leq f_0-F_0d,\;\;\forall (\phi_{t_0},\;d)\in\Phi\times \mathbf{D} \}$.}} The right-hand side of the inequality involves subtraction $f_0-F_0d \;\;\forall d\in\mathbf{D}$, from which it can be deduced that the irredundant halfspaces contributing to the set definition are the ones that correspond to the worst case disturbance. To reduce computation, instead of subtracting with all $d\in\mathbf{D}$, it is preferable to use the function $\zeta_\mathbf{D}(\cdot)$, defined in \eqref{zetaP}, which is a vector of support functions (see \textit{Notations and Definitions}) that capture the worst effect of the disturbance, and then redefine $\mathbf{S}_1=\{ x\;{|}\;F_0\phi_{t_0} x\leq f_0-\zeta_{\mathbf{D}}(F_0),\;\;\forall \phi_{t_0}\in\Phi\}$. By definition, $\mathbf{S}_1$ is the set of all states that enter $\mathbf{S}_0$ in one time step following the dynamics in \eqref{invdyn}. Similarly, we can define sets 
{{\begin{align}
&{{\mathbf{S}_k}}\triangleq \{ x\;{|}\;F_0 {{\phi_{t_{k-1}}\phi_{t_{k-2}}...\phi_{t_{0}}}} x \leq f_0-\zeta_{\mathbf{D}}(F_0)-\zeta_{\mathbf{D}}(F_0\phi_{t_{k-1}}) \nonumber\\
&\;\;\;\;\;\;\;\;\;\;\;\;\;\;\;\;\;\;\;\;-\zeta_{\mathbf{D}}(F_0\phi_{t_{k-1}}\phi_{t_{k-2}})...-\zeta_{\mathbf{D}}\left( F_0\phi_{t_{k-1}}\phi_{t_{k-2}}...\phi_{t_1} \right), \nonumber\\
& \;\;\;\;\;\;\;\;\;\;\;\;\;\;\;\;\;\;\;\;\;\;\;\;\;\;\forall \phi_{t_{k-1}},\;\phi_{t_{k-2}},...,\phi_{t_0}\in\Phi \}\;\;\;\;\;\;\forall k\in\mathbb{I}_1^\infty,\label{noco_Sk}
\end{align}}}
where $\mathbf{S}_k$ is the set of all states that enter $\mathbf{S}_0$ in $k$ time steps.


 \begin{theo}\label{theo1}
      {{The MARPI set $\mathbf{S}$ for \eqref{invdyn} is given by 
       \begin{align}
       \mathbf{S}\triangleq \bigcap_{k=0}^\infty \mathbf{S}_k,\label{Sint}\end{align}
       where each $\mathbf{S}_k$ is defined in \eqref{noco_Sk}. Also, $\mathbf{S}$ is convex.}}
  \end{theo}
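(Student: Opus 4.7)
The plan is to interpret the recursively defined sets $\mathbf{S}_k$ as worst-case $k$-step backward reachable sets to $\mathbf{S}_0$ and then check, using this interpretation, that $\mathbf{S} = \bigcap_{k=0}^\infty \mathbf{S}_k$ satisfies the three defining properties of the MARPI set under \eqref{invdyn}: positive invariance, admissibility ($\mathbf{S}\subseteq \mathbf{S}_0$), and maximality among admissible RPI sets. Convexity will then fall out from the halfspace structure of \eqref{noco_Sk}.

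First I would unroll \eqref{invdyn} to obtain $x_k = \phi_{t_{k-1}}\cdots\phi_{t_0}\,x_0 + \sum_{j=0}^{k-1} \phi_{t_{k-1}}\cdots\phi_{t_{j+1}}\,d_{t_j}$ with the empty product set to $I$. Imposing $F_0 x_k \leq f_0$ simultaneously for every $d_{t_j}\in \mathbf{D}$ reduces, row by row via the definition of the support function and $\zeta_\mathbf{D}(\cdot)$ in \eqref{zetaP}, to the worst-case correction $\zeta_\mathbf{D}(F_0\phi_{t_{k-1}}\cdots\phi_{t_{j+1}})$ for each $j$. Collecting these corrections reproduces exactly the inequality in \eqref{noco_Sk}, so $\mathbf{S}_k$ is precisely the set of $x_0$ for which $x_k \in \mathbf{S}_0$ holds under every realization of $(\phi_{t_0},\ldots,\phi_{t_{k-1}})\in \Phi^k$ and $(d_{t_0},\ldots,d_{t_{k-1}})\in \mathbf{D}^k$.

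With this interpretation, positive invariance follows from a one-step shift. Fix $x_0\in \mathbf{S}$ and arbitrary $(\phi_0,d_0)\in\Phi\times\mathbf{D}$, set $x_1 = \phi_0 x_0 + d_0$, and note that for any $k\geq 0$ and any tail realization $(\phi_1,d_1,\ldots,\phi_{k-1},d_{k-1})$, the time-$k$ state starting from $x_1$ equals the time-$(k{+}1)$ state starting from $x_0$ under the prepended realization. Since $x_0\in\mathbf{S}_{k+1}$, the latter lies in $\mathbf{S}_0$, hence $x_1\in\mathbf{S}_k$ for every $k$, and so $x_1\in \mathbf{S}$. Admissibility is immediate because $\mathbf{S}_0$ is one of the intersecting sets. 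For maximality I would take an arbitrary admissible RPI set $\mathbf{T}$ for \eqref{invdyn}: any trajectory starting at $x\in\mathbf{T}$ stays in $\mathbf{T}\subseteq \mathbf{S}_0$ under every realization, so $x\in\mathbf{S}_k$ for every $k$, giving $\mathbf{T}\subseteq \mathbf{S}$.

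For convexity, each $\mathbf{S}_k$ in \eqref{noco_Sk} is an intersection of halfspaces in $x$ indexed by tuples in $\Phi^k$, hence convex, and an arbitrary intersection of convex sets is convex. The only place that needs technical care is the shift bookkeeping in the invariance step, i.e., matching the tail realization for $x_1$ with an extended realization for $x_0$ while allowing the prepended $\phi_0\in\Phi$ and $d_0\in\mathbf{D}$ to vary freely; this is a routine reindexing rather than a substantive obstacle once the backward-reachable reading of \eqref{noco_Sk} is established.
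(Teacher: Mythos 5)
Your proposal is correct and follows essentially the same route as the paper's proof: it identifies $\mathbf{S}_k$ as the worst-case $k$-step backward reachable set of $\mathbf{S}_0$ (with the support-function terms absorbing the disturbance), establishes invariance by the one-step shift $x_0\in\mathbf{S}_{k+1}\Rightarrow \phi_0 x_0+d_0\in\mathbf{S}_k$, obtains maximality from the fact that any admissible RPI set must lie in every $\mathbf{S}_k$, and gets convexity from the halfspace representation. The only cosmetic difference is that you argue maximality directly by containing an arbitrary admissible RPI set $\mathbf{T}$, whereas the paper argues the contrapositive on points of $\mathbf{S}_0\backslash\mathbf{S}$; these are equivalent.
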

\begin{proof}
{{By definition of $\mathbf{S}_k$ in \eqref{noco_Sk}, if the initial state $x_0\in\mathbf{S}_k$, then, following the dynamics in \eqref{invdyn}, the state $x_k\in\mathbf{S}_0$. Therefore, the intersection of the sets $\mathbf{S}_k$ $\forall k\in\mathbb{I}_0^\infty$ yields the set of all possible states $x_0\in\mathbf{S}_0$ such that $x_1,\;x_2,\;x_3,\;...,\;\lim_{k\rightarrow\infty}x_k \in\mathbf{S}_0$ following the dynamics in \eqref{invdyn}. Further, for any $ z\in\mathbf{S}=\mathbf{S}_0\cap\mathbf{S}_1\cap\mathbf{S}_2\cap...$, we can write
\begin{align*}
   & z\in\mathbf{S}_0\Rightarrow F_0 z\leq f_0,\\
   & z\in\mathbf{S}_1\Rightarrow F_0 (\phi_{t_0} z+d) \leq f_0 \Rightarrow \phi_{t_0} z+d \in\mathbf{S}_0,\\
   & z\in\mathbf{S}_2\Rightarrow F_0 \left(\phi_{t_1}(\phi_{t_0}z+d)+d \right)\leq f_0 \Rightarrow \phi_{t_0}z+d\in\mathbf{S}_1,\\
  &\;\;\;\;\;\;\;\;\;\;\;\;\;\;\;\;  \vdots\\
 & z\in \lim_{k\rightarrow\infty}\mathbf{S}_k\Rightarrow \phi_{t_0}z+d \in\lim_{k\rightarrow\infty}\mathbf{S}_{k-1},
\end{align*}
where $\phi_{t_0},\;\phi_{t_1}\in\Phi$ and $d\in\mathbf{D}$, i.e., $z\in\mathbf{S}\Rightarrow \phi_{t_0}z+d\in\mathbf{S}_0\cap\mathbf{S}_1\cap\mathbf{S}_2\cap...=\mathbf{S}$. Therefore, $\mathbf{S}$ is an admissible RPI set.}}

{{If $\mathbf{S}=\mathbf{S}_0$, then $\mathbf{S}$ is the maximal. If $\mathbf{S}\subset \mathbf{S}_0$, consider any element $z\in\mathbf{S}_0\backslash \mathbf{S}$. Then, $\exists$ at least one $k\in\mathbb{I}_1^\infty$ such that if $x_0=z$, the state $x_k$ will leave $\mathbf{S}_0$. This implies that any $z \in \mathbf{S}_0\backslash \mathbf{S}$ does not belong to an admissible RPI set, and therefore, $\mathbf{S}$ is the MARPI set.}}

{{Since the dynamics in \eqref{invdyn} is linear, each of the sets $\mathbf{S}_k$ can be represented as the intersection of halfspaces as defined in \eqref{noco_Sk}. Consequently, the set $\mathbf{S}$ is obtained by the intersection of infinite halfspaces. Let $z_1,\; z_2\in \mathbf{S}$. Then, $Gz_1\leq g$ and $Gz_2\leq g$ where the pair $(G,\;g)$ represents any of the halfspaces forming $\mathbf{S}$. For any $\lambda\in[0,1]$, $G\left( \lambda z_1+(1-\lambda )z_2\right)=\lambda G z_1 +(1-\lambda) G z_2\leq \lambda g+(1-\lambda)g = g$. This is true for all the halfspaces of $\mathbf{S}$, which proves that $\mathbf{S}$ is convex. }}
  \end{proof}
  
Computing the sets $\mathbf{S}_k$ as defined in \eqref{noco_Sk} is intractable since there are infinite halfspaces owing to infinite number of elements $\phi\in\Phi$. The following lemma shows that it is in fact sufficient to construct $\mathbf{S}_k$ using only the vertices of $\Phi$ instead of each individual element in $\Phi$.

\begin{lemma}\label{lemma1}
  {{ Each set $\mathbf{S}_k$ in \eqref{noco_Sk} can be redefined recursively using the vertices of $\Phi$ defined in \eqref{setPhi} as}}
   \begin{align}
   & {\mathbf{S}_k=\left\{x\;|\;F_kx\leq f_k\right\}\;\forall k\in\mathbb{I}_1^\infty,\label{Sintprevious}}\\   &{\text{where } F_k\triangleq\begin{bmatrix}
      F_{k-1}\phi^{[1]}\\F_{k-1}\phi^{[2]}\\\vdots\\F_{k-1}\phi^{[L]}
  \end{bmatrix}\in\mathbb{R}^{L^k(n_x+n_u)\times n}\label{Ak}}\\
  & {\text{and }f_k\triangleq
      \mathds{1}_{L}\otimes \left( f_{k-1}-
       \zeta_{\mathbf{D}}\left( F_{k-1}  \right) \right)\in\mathbb{R}^{L^k(n_x+n_u)}.}\label{bk}
  \end{align}
\end{lemma}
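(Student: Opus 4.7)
The plan is to prove the lemma by induction on $k$, with the key mechanism being a one-step recursive characterization of $\mathbf{S}_k$ combined with a convexity argument that reduces the continuum $\phi \in \Phi$ to just the $L$ vertices. The two enabling facts are: (i) $\mathbf{S}_{k-1}$ is an intersection of finitely many halfspaces (hence convex) by the inductive hypothesis; (ii) every $\phi \in \Phi$ is a convex combination $\phi = \sum_{i=1}^{L} \alpha_i \phi^{[i]}$ with $\alpha_i \geq 0$ and $\sum_i \alpha_i = 1$, per \eqref{setPhi}.

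For the base case $k=1$, I would start from the definition $\mathbf{S}_1 = \{x \mid F_0 \phi_{t_0} x \leq f_0 - \zeta_{\mathbf{D}}(F_0),\ \forall \phi_{t_0} \in \Phi\}$. Writing $\phi_{t_0} = \sum_i \alpha_i \phi^{[i]}$, linearity gives $F_0 \phi_{t_0} x = \sum_i \alpha_i F_0 \phi^{[i]} x$, and using $\sum_i \alpha_i = 1$ with $\alpha_i \geq 0$, the inequality at all $\phi_{t_0} \in \Phi$ is equivalent to the inequality at the $L$ vertices. Stacking these $L$ vertex-inequalities vertically yields $F_1 x \leq f_1$ with $F_1$ and $f_1$ exactly as in \eqref{Ak}--\eqref{bk} for $k=1$.

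For the inductive step, I would first re-express \eqref{noco_Sk} in the one-step backward-reachable form $\mathbf{S}_k = \{x \mid \phi\, x + d \in \mathbf{S}_{k-1}\ \forall\, \phi \in \Phi,\ d \in \mathbf{D}\}$. This decomposition follows because the nested product $F_0 \phi_{t_{k-1}}\cdots\phi_{t_0}x$ together with the telescoping sum of $\zeta_{\mathbf{D}}$-terms in \eqref{noco_Sk} is exactly the condition "after one step (under arbitrary $\phi_{t_0}, d_0$), the successor state lies in $\mathbf{S}_{k-1}$" — the $k-1$ remaining steps are absorbed into membership in $\mathbf{S}_{k-1}$. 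Invoking the inductive hypothesis $\mathbf{S}_{k-1} = \{y \mid F_{k-1} y \leq f_{k-1}\}$, the membership condition becomes $F_{k-1}(\phi x + d) \leq f_{k-1}$ for all $(\phi,d) \in \Phi \times \mathbf{D}$. Passing to the worst-case disturbance rowwise via the support function gives $F_{k-1} \phi x \leq f_{k-1} - \zeta_{\mathbf{D}}(F_{k-1})$ for all $\phi \in \Phi$, and the same vertex-reduction used in the base case (now applied to $F_{k-1}$ in place of $F_0$, which is legitimate because the right-hand side no longer depends on $\phi$) shows it suffices to enforce this at $\phi^{[1]},\ldots,\phi^{[L]}$. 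Stacking these $L$ blocks produces $F_k$ and $f_k$ exactly as in \eqref{Ak}--\eqref{bk}, and the dimension count $L^k(n_x+n_u)$ follows by multiplying the inductive dimension by $L$.

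The main obstacle I anticipate is rigorously justifying the factorization of the explicit definition \eqref{noco_Sk} into the clean one-step predecessor form, because the $\zeta_{\mathbf{D}}$ terms there are nested through successive products $F_0 \phi_{t_{k-1}}\cdots\phi_{t_{j+1}}$, and one must check that the worst-case disturbance choice genuinely commutes with (rather than coupling across) the worst-case parameter choice over $\Phi$. A careful rowwise argument — exploiting that the support function of $\mathbf{D}$ is additive along the independent disturbance arguments $d_0,\ldots,d_{k-1}$, together with the fact that once $x_1 = \phi_{t_0}x_0 + d_0$ is fixed the remaining trajectory condition is precisely membership of $x_1$ in $\mathbf{S}_{k-1}$ — should resolve this cleanly.
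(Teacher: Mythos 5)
Your proposal is correct and takes essentially the same route as the paper's proof: reduce the constraint over the continuum $\Phi$ to the $L$ vertices via convex combinations (valid because the right-hand side does not depend on the newly introduced $\phi$), and absorb the disturbance rowwise through the support function $\zeta_{\mathbf{D}}(\cdot)$. The only difference is presentational---the paper proves the $k=1$ case in detail and dispatches general $k$ with ``continuing similarly,'' whereas you make the induction and the one-step predecessor factorization of \eqref{noco_Sk} explicit, which is a welcome clarification rather than a divergence.
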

\begin{proof}
    See subsection \textit{A} in the Appendix.
\end{proof}

Although Lemma \ref{lemma1} enables recursive computation of the sets $\mathbf{S}_k$, it is still practically impossible to perform the infinite intersections in \eqref{Sint} for obtaining $\mathbf{S}$. However, \eqref{Sint}-\eqref{bk} are useful for gaining insight into some properties of $\mathbf{S}$, based on which a tractable algorithm is mentioned in Section \ref{sectrac}. 

The subtraction of the support functions in the computation of each $\mathbf{S}_k$ may result {{in $\mathbf{S}$ being empty; therefore, we propose conditions to check the existence of the non-empty MARPI set $\mathbf{S}$.}} To this end, consider
\begin{align}\label{defD}
    \mathbf{D}_k\triangleq {{\left(\bigcup_{i=1}^L \phi^{[i]}\mathbf{D}_{k-1} \right) \oplus \mathbf{D}}}\;\;\;\forall k\in\mathbb{I}_1^\infty,\;\text{and }
    \mathbf{D}_0\triangleq\{0_n\}.
\end{align}
In the limiting case when $k\rightarrow\infty$ in \eqref{defD}, we obtain $\mathbf{D}_\infty$ which is the union of infinite convex sets, and is called the mRPI set\footnote{With $x_0=\{0_n\}$ and the bounded uncertainties $\phi_t$ and $d_t$, all possible values of $\lim_{t\rightarrow\infty} x_t\in\mathbf{D}_\infty$ and further, $\phi\mathbf{D}_\infty \oplus\mathbf{D} \subseteq \mathbf{D}_\infty$ $\forall \phi\in\Phi$.}\cite{kouramas2005minimal,rakovic2005invariant}. The union may be non-convex, and computing such a set is challenging. For deriving the existence condition of a non-empty $\mathbf{S}$, we show that it suffices to use the convex hull of $\mathbf{D}_\infty$ given by $\mathbf{D}_\infty^{\text{co}}=\lim_{k\rightarrow\infty}\mathbf{D}_k^{\text{co}}$, where
{{\begin{align}\label{defDco}
    \mathbf{D}^{\text{co}}_k\triangleq \text{conv}\left(\bigcup_{i=1}^L \phi^{[i]}\mathbf{D}^{\text{co}}_{k-1} \right) \oplus \mathbf{D}\;\;\;\forall k\in\mathbb{I}_1^\infty,\;\;\mathbf{D}_0^{\text{co}}\triangleq\mathbf{D}_0.
\end{align}}}
The set $\mathbf{D}_k^{\text{co}}$ can be constructed following the algorithm developed in \cite{kouramas2005minimal} using the recursion in \eqref{defDco}. {{In the next theorem, we establish the necessary and sufficient conditions for the existence of the non-empty MARPI set.}} 

\begin{theo}\label{theo2}
{{For the system given in \eqref{invdyn},\\
\emph{(a)} the MARPI set $\mathbf{S}\neq\emptyset$ exists iff the mRPI set $\mathbf{D}_\infty\subseteq\mathbf{S}_0$.}} \\
\emph{(b)} Alternatively, let $f_{\min}\triangleq \min_{i\in\mathbb{I}_1^{n_x+n_u}} \tilde{f}_\infty(i)\in\mathbb{R}$ where $\tilde{f}_\infty\triangleq f_0-\zeta_{{{\mathbf{D}^{\text{co}}_\infty}}}( F_0 )$. The set $\mathbf{S}\neq\emptyset$ exists {{iff }}$f_{\min}\geq 0$. 
\end{theo}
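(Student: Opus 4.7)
The plan is to prove part (a) first, and then derive part (b) by translating the set-containment $\mathbf{D}_\infty \subseteq \mathbf{S}_0$ into an equivalent inequality on the support-function vector $\zeta_{\mathbf{D}_\infty^{\text{co}}}(F_0)$ while exploiting the convexity of $\mathbf{S}_0$.

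For the sufficiency direction in (a), I would start from $\mathbf{D}_\infty \subseteq \mathbf{S}_0$ and use convexity of $\mathbf{S}_0$ together with $\mathbf{D}_\infty^{\text{co}} = \text{conv}(\mathbf{D}_\infty)$ to upgrade this to $\mathbf{D}_\infty^{\text{co}} \subseteq \mathbf{S}_0$. A direct check on the fixed-point form of \eqref{defDco} then shows that $\phi \mathbf{D}_\infty^{\text{co}} \oplus \mathbf{D} \subseteq \mathbf{D}_\infty^{\text{co}}$ for every $\phi \in \Phi$: writing $\phi = \sum_i \lambda_i \phi^{[i]}$ as a convex combination of vertices, $\phi q = \sum_i \lambda_i (\phi^{[i]} q)$ is a convex combination of points in $\cup_i \phi^{[i]} \mathbf{D}_\infty^{\text{co}}$, hence lies in its convex hull. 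Thus $\mathbf{D}_\infty^{\text{co}}$ is a non-empty admissible RPI set (it contains $0 \in \mathbf{D}_0^{\text{co}}$), and by the maximality asserted in Theorem \ref{theo1}, $\mathbf{S} \supseteq \mathbf{D}_\infty^{\text{co}} \neq \emptyset$.

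For the necessity direction in (a), I would fix an arbitrary $z \in \mathbf{S}$ and show that every $w \in \mathbf{D}_\infty$ lies in $\mathbf{S}_0$. Viewing $\mathbf{D}_\infty$ as the limit of the $\mathbf{D}_k$, I pick $w_k \in \mathbf{D}_k$ with $w_k \to w$; by the recursive definition \eqref{defD}, each $w_k$ is reachable from the origin in exactly $k$ steps under some vertex realization $(\phi^{[i_0]}, \ldots, \phi^{[i_{k-1}]})$ and disturbances $(d_0, \ldots, d_{k-1}) \in \mathbf{D}^k$. Applying the same realization starting from $x_0 = z$ yields $x_k = \Pi_k z + w_k$ with $\Pi_k \triangleq \phi^{[i_{k-1}]} \cdots \phi^{[i_0]}$. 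The common quadratically stabilizing gain $K$ \cite{khargonekar1990robust,de1999new} supplies a common Lyapunov function and hence a uniform exponential bound $\|\Pi_k\| \leq c \rho^k$ with $\rho \in [0,1)$, so $\Pi_k z \to 0$ and $x_k \to w$. Since $\mathbf{S}$ is RPI and $z \in \mathbf{S}$, each $x_k \in \mathbf{S} \subseteq \mathbf{S}_0$, and closedness of the polytope $\mathbf{S}_0$ forces $w \in \mathbf{S}_0$.

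For (b), the definition \eqref{zetaP} of $\zeta_{\mathbf{D}_\infty^{\text{co}}}(F_0)$ identifies $f_{\min} \geq 0$ with the componentwise inequality $f_0 \geq \zeta_{\mathbf{D}_\infty^{\text{co}}}(F_0)$, which by the defining property of the support function is equivalent to $F_0 w \leq f_0$ for all $w \in \mathbf{D}_\infty^{\text{co}}$, i.e., $\mathbf{D}_\infty^{\text{co}} \subseteq \mathbf{S}_0$. The inclusion $\mathbf{D}_\infty \subseteq \mathbf{D}_\infty^{\text{co}}$ combined with convexity of $\mathbf{S}_0$ yields $\mathbf{D}_\infty^{\text{co}} \subseteq \mathbf{S}_0 \Leftrightarrow \mathbf{D}_\infty \subseteq \mathbf{S}_0$, so (b) follows from (a). The hard part will be the necessity in (a): steering an origin-reachable target $w \in \mathbf{D}_\infty$ into $\mathbf{S}_0$ along a trajectory initiated at an unrelated $z \in \mathbf{S}$, resolved by coupling the finitely-reachable approximants $w_k \in \mathbf{D}_k$ with the uniform exponential decay of $\Pi_k$ guaranteed by the quadratically stabilizing $K$, so that the influence of $z$ vanishes in the limit.
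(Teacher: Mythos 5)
Your proof is correct and follows the same route as the paper: part (b) is reduced to part (a) by the identical chain $f_{\min}\geq 0 \Leftrightarrow f_0\geq \zeta_{\mathbf{D}_\infty^{\text{co}}}(F_0) \Leftrightarrow \mathbf{D}_\infty^{\text{co}}\subseteq\mathbf{S}_0 \Leftrightarrow \mathbf{D}_\infty\subseteq\mathbf{S}_0$ using support functions and the convexity of $\mathbf{S}_0$, and part (a) rests on the minimality of the mRPI set together with the maximality of $\mathbf{S}$ from Theorem \ref{theo1}. The one place you go beyond the paper is that you actually prove the two facts it only asserts — that any nonempty admissible RPI set must contain $\mathbf{D}_\infty$ (your vanishing-transient argument $x_k=\Pi_k z+w_k$ with the uniform bound $\|\Pi_k\|_2\leq c\rho^k$ from quadratic stabilizability and closedness of $\mathbf{S}_0$), and that an admissible RPI set exists whenever $\mathbf{D}_\infty\subseteq\mathbf{S}_0$ (your explicit verification that $\mathbf{D}_\infty^{\text{co}}$ is itself admissible and RPI) — which strengthens rather than changes the argument.
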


\begin{proof}
{{\emph{(a)} By definition, the MARPI set $\mathbf{S}$ is required to be a subset of $\mathbf{S}_0$. If the minimal RPI set $\mathbf{D}_\infty \not\subseteq\mathbf{S}_0$, then there can not exist any other non-empty admissible RPI set in $\mathbf{S}_0$. This implies $\mathbf{S}\neq\emptyset$ exists only if $\mathbf{D}_\infty\subseteq\mathbf{S}_0$. 

Further, if $\mathbf{D}_\infty\subseteq\mathbf{S}_0$, then $\exists$ at least one admissible RPI set $\mathbb{S}\supseteq \mathbf{D}_\infty$, guaranteeing the existence of $\mathbf{S}\supseteq\mathbb{S}$. In other words, $\mathbf{D}_\infty\subseteq\mathbf{S}_0$ guarantees that each row entry in $f_k\geq 0$ $\forall k\in\mathbb{I}_1^\infty$ resulting in non-empty $\mathbf{S}_k$, and since each $\mathbf{S}_k$ contains the origin, the set $\mathbf{S}$ is non-empty and therefore, $\mathbf{S}\neq\emptyset$ exists.}}

{{\emph{(b)} Let each of the infinite convex sets whose union forms $\mathbf{D}_\infty$ be denoted by $\mathbb{D}_{[j]}$ where $j\in\mathbb{I}_0^\infty$. {\color{black}{From \emph{(a)}, $\mathbf{S}\neq\emptyset$ exists 
\newpage \vspace*{-0.2cm} \noindent iff}} ${\color{black}{\mathbf{D}_\infty\subseteq\mathbf{S}_0}}\Leftrightarrow \bigcup_{j=0}^\infty\mathbb{D}_{[j]}\subseteq\mathbf{S}_0\Leftrightarrow \text{conv}\left(\bigcup_{j=0}^\infty\mathbb{D}_{[j]} \right)\subseteq\mathbf{S}_0$ (since $\mathbf{S}_0$ is convex) $\Leftrightarrow \text{conv}( \mathbf{D}_\infty )\subseteq\mathbf{S}_0\Leftrightarrow \mathbf{D}_\infty^{\text{co}}\subseteq\mathbf{S}_0 \Leftrightarrow {\color{black}{ F_0 d\leq f_0\;\;\forall d\in}}\mathbf{D}_\infty^{\text{co}}\Leftrightarrow {\color{black}{ f_0-\zeta_{{{\mathbf{D}_\infty^{\text{co}}}}}(F_0)\geq 0_{n_x+n_u}\Leftrightarrow f_{\min}\geq 0.}}$}}
  \end{proof}
   
{{Next, we show that if $\mathbf{S}\neq\emptyset$ exists, then it can be obtained with finite intersections, unlike \eqref{Sint}.}} To achieve this, define    
  \begin{align}
\bar{F}\triangleq||F_0||_2 \;\text{, }\;{{\bar{x} }}\triangleq \max_{x\in\mathbf{S}_0}  ||x||_2  \;\text{ and }\;\phi_{\max}\triangleq \max_{i\in\mathbb{I}_1^L} ||\phi^{[i]}||_2.\label{defFbar}
\end{align}

\begin{lemma}\label{lemma2}
      If $\phi_{\max}<1$, then       
          $\frac{\ln{f_{\min}-\ln{\left(\bar{F}\bar{x}\right)}}}{\ln{\phi_{\max}}}\geq 0.$
  \end{lemma}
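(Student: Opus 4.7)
The plan is to show that the numerator $\ln f_{\min} - \ln(\bar{F}\bar{x})$ is non-positive and that the denominator $\ln \phi_{\max}$ is strictly negative, so that the ratio is non-negative. The whole argument will reduce to proving the geometric inequality $f_{\min} \leq \bar{F}\bar{x}$ and then invoking monotonicity of the logarithm.

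First I would bound $\tilde{f}_\infty$ componentwise by $f_0$. Since $0_n \in \mathbf{D}$ and $\mathbf{D}_0^{\text{co}} = \{0_n\}$, the recursion in \eqref{defDco} gives $0_n \in \mathbf{D}_k^{\text{co}}$ for every $k$, so $0_n \in \mathbf{D}_\infty^{\text{co}}$. Hence, for each row $F_0(i)$, the support function $h_{\mathbf{D}_\infty^{\text{co}}}(F_0(i)^\intercal) \geq 0$ (taking $d = 0_n$ in the supremum), which implies $\tilde{f}_\infty(i) \leq f_0(i)$ for all $i \in \mathbb{I}_1^{n_x + n_u}$. Next I would exploit the geometry of $\mathbf{S}_0$. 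Since $\mathbf{S}_0 \subseteq \mathbf{X}$ is a bounded, closed polytope and $\|\cdot\|_2^2$ is convex, the maximum defining $\bar{x}$ is attained at some extreme point $x^\star$ lying on the boundary of $\mathbf{S}_0$. At a boundary point of the polytope, at least one constraint is active, so there exists $i^\star$ with $F_0(i^\star)\, x^\star = f_0(i^\star)$. Writing the row as $F_0(i^\star) = e_{i^\star}^\intercal F_0$ and applying Cauchy--Schwarz,
\begin{align*}
f_0(i^\star) \;=\; F_0(i^\star)\, x^\star \;\leq\; \|F_0(i^\star)\|_2\, \|x^\star\|_2 \;\leq\; \|F_0\|_2\, \bar{x} \;=\; \bar{F}\bar{x}.
\end{align*}
Chaining the two bounds gives $f_{\min} = \min_i \tilde{f}_\infty(i) \leq \tilde{f}_\infty(i^\star) \leq f_0(i^\star) \leq \bar{F}\bar{x}$.

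Taking logarithms (well-defined since $f_{\min} > 0$ is implicit in the expression under consideration, and $\bar{F}\bar{x} > 0$ for a non-degenerate $\mathbf{S}_0$) yields $\ln f_{\min} - \ln(\bar{F}\bar{x}) \leq 0$. By the hypothesis $\phi_{\max} < 1$ we get $\ln \phi_{\max} < 0$, and the ratio of two non-positive numbers is non-negative, concluding the proof. The only subtle step is pinning down the row index $i^\star$: one must not assume an irredundant halfspace representation of $\mathbf{S}_0$, and the active-constraint argument at the norm-maximizer $x^\star$ on the boundary is what keeps the proof representation-independent. Once $i^\star$ is identified, the remainder is a routine chain of norm inequalities and monotonicity of $\ln$.
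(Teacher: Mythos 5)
Your proof is correct and lands on the same pivotal inequality as the paper, namely $f_{\min}\leq \bar{F}\bar{x}$, but reaches it by a different route. The paper first notes $f_{\min}\leq \min_i f_0(i)$ (for essentially the reason you give: $0_n\in\mathbf{D}_\infty^{\text{co}}$ makes $\zeta_{\mathbf{D}_\infty^{\text{co}}}(F_0)\geq 0_{n_x+n_u}$) and then \emph{inscribes} the Euclidean ball $\bar{\mathbf{S}}_\infty=\{z\;|\;\|z\|_2\leq f_{\min}/\bar{F}\}$ inside $\mathbf{S}_0$ using the row-norm $\leq$ induced-norm bound; the existence of a feasible point of norm $f_{\min}/\bar{F}$ immediately yields $\bar{x}\geq f_{\min}/\bar{F}$. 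You instead go to the norm-maximizer $x^\star$, extract an active row $i^\star$ there, and chain $f_{\min}\leq\tilde{f}_\infty(i^\star)\leq f_0(i^\star)=F_0(i^\star)x^\star\leq\bar{F}\bar{x}$ via Cauchy--Schwarz. The two arguments are dual in flavor---an inner-approximation lower bound on $\bar{x}$ versus evaluation of a tight constraint at the farthest point---and rest on the same two ingredients ($\|F_0(i)\|_2\leq\|F_0\|_2$ and non-negativity of the support function of a set containing the origin). The paper's ball argument is marginally more economical in that it does not need the facts that the maximum of a convex function over a compact polytope is attained at an extreme point and that at least one constraint is active there; your version avoids constructing auxiliary sets and, as you note, is representation-independent (though so is the paper's, since it only uses $F_0z\leq f_0$). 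Both treatments leave the same edge cases implicit ($f_{\min}>0$ and $\bar{x}>0$ are needed for the logarithms to be defined), so that is not a gap relative to the paper.
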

  \begin{proof}
    See subsection \textit{B} in the Appendix.
\end{proof}

\begin{theo}\label{theo3}
{{If the conditions for the existence of $\mathbf{S}\neq\emptyset$ given in Theorem \ref{theo2} hold, then $\exists$ a finite integer $N\geq 0$  such that $\mathbf{S}=$}} $\bar{\mathbf{S}}\triangleq\bigcap_{k=0}^N\mathbf{S}_k$, {{i.e., $\mathbf{S}$ can be finitely determined}}. 
  \end{theo}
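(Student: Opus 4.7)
The plan is to use Lemma \ref{lemma2} to pin down an explicit finite stopping index $N$ beyond which appending further sets to the intersection cannot shrink it. I aim for the stronger inclusion $\mathbf{S}_0\subseteq\mathbf{S}_k$ for every $k>N$, since this immediately gives $\bar{\mathbf{S}}\subseteq\mathbf{S}_k$ for all $k\in\mathbb{I}_0^\infty$ and hence, together with the trivial $\mathbf{S}\subseteq\bar{\mathbf{S}}$, forces $\mathbf{S}=\bar{\mathbf{S}}$ via Theorem \ref{theo1}.

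The first ingredient is a uniform row-norm bound on $F_k$. Unrolling the recursion in Lemma \ref{lemma1}, every row of $F_k$ has the form $F_0(r)\,\phi^{[i_{k-1}]}\phi^{[i_{k-2}]}\cdots\phi^{[i_0]}$ for some row index $r$ and indices $i_0,\ldots,i_{k-1}\in\mathbb{I}_1^L$. Since $\|F_0(r)^\intercal\|_2^{\,2}=(F_0F_0^\intercal)_{rr}\leq\lambda_{\max}(F_0F_0^\intercal)=\bar F^{\,2}$ and the spectral norm of any product of the $\phi^{[i]}$'s is bounded by $\phi_{\max}^k$, every row of $F_k$ has 2-norm at most $\bar F\,\phi_{\max}^k$. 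Consequently, for any $x\in\mathbf{S}_0$ and any row index $j$, $|F_k(j)\,x|\leq \bar F\,\phi_{\max}^k\,\bar x$.

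The second ingredient is the matching entry-wise lower bound $f_k(j)\geq f_{\min}$. Unrolling \eqref{bk} gives, for each $j$,
\begin{equation*}
 f_k(j)=f_0(r)-h_{\mathbf{D}}(F_0(r)^\intercal)-\sum_{s=1}^{k-1}h_{\mathbf{D}}\!\left((\phi^{[i_{s-1}]}\cdots\phi^{[i_0]})^\intercal F_0(r)^\intercal\right).
\end{equation*}
Using the identity $h_{\mathbf{D}}(M^\intercal v)=h_{M\mathbf{D}}(v)$ together with additivity of support functions over Minkowski sums, this collapses to $f_0(r)-h_{\mathcal{M}}(F_0(r)^\intercal)$, where $\mathcal{M}\triangleq\mathbf{D}\oplus\phi^{[i_0]}\mathbf{D}\oplus\cdots\oplus\phi^{[i_{k-2}]}\cdots\phi^{[i_0]}\mathbf{D}$. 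A direct induction on the recursion \eqref{defD}, using $0_n\in\mathbf{D}$, shows $\mathcal{M}\subseteq\mathbf{D}_\infty\subseteq\mathbf{D}^{\text{co}}_\infty$, whence monotonicity of support functions yields $f_k(j)\geq f_0(r)-h_{\mathbf{D}^{\text{co}}_\infty}(F_0(r)^\intercal)=\tilde f_\infty(r)\geq f_{\min}$.

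Combining the two ingredients, $F_k x\leq f_k$ holds for every $x\in\mathbf{S}_0$ whenever $\bar F\,\phi_{\max}^k\,\bar x\leq f_{\min}$; Lemma \ref{lemma2} (with $\ln\phi_{\max}<0$) ensures this occurs once $k\geq(\ln f_{\min}-\ln(\bar F\bar x))/\ln\phi_{\max}$, a non-negative threshold that admits a finite integer ceiling. Setting $N$ equal to that ceiling therefore gives $\mathbf{S}_0\subseteq\mathbf{S}_k$ for all $k>N$ and closes the argument. The main obstacle, in my view, is the bookkeeping in the unrolling of $f_k$: one must match, for each final row of $F_k$, the exact sequence $(i_0,\ldots,i_{k-1})$ producing that row with the sequence of $\phi$-transformed supports subtracted from $f_0(r)$, and then verify that the resulting Minkowski sum is indeed captured by the $k$-step recursion defining $\mathbf{D}_\infty$.
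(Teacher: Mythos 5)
Your overall strategy coincides with the paper's: establish $\mathbf{S}_0\subseteq\mathbf{S}_k$ for all large $k$ via the row-norm bound $\bar F\phi_{\max}^k\bar x$ on $F_k z$ and the entry-wise lower bound $f_k(j)\geq f_{\min}$, then invoke Lemma \ref{lemma2}. Your justification of $f_k(j)\geq f_{\min}$ is in fact more detailed than the paper's (which simply asserts $f_{\min}\leq\min_i f_k(i)$ ``by definition''), and the support-function identities you use are sound. One bookkeeping caution: unrolling \eqref{Ak}--\eqref{bk} produces subtracted supports at $F_0(r)$, $F_0(r)\phi^{[a_1]}$, $F_0(r)\phi^{[a_1]}\phi^{[a_2]},\dots$, i.e.\ a Minkowski sum $\mathbf{D}\oplus\phi^{[a_1]}\mathbf{D}\oplus\phi^{[a_1]}\phi^{[a_2]}\mathbf{D}\oplus\cdots$ whose partial products share a common \emph{leftmost} factor; the sum you wrote, with products sharing a common rightmost factor $\phi^{[i_0]}$, is not of this form and is not directly generated by iterating the RPI recursion \eqref{defD}. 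The correct ordering does embed into $\mathbf{D}_k\subseteq\mathbf{D}_\infty$, so this is reparable, but as written the inclusion $\mathcal{M}\subseteq\mathbf{D}_\infty$ does not follow by the induction you describe.

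The genuine gap is that your argument only covers the case $\phi_{\max}<1$. The theorem's hypotheses do not grant this: quadratic stabilizability ensures each $\phi^{[i]}$ is Schur stable, i.e.\ has spectral radius below one, but the spectral norm $\|\phi^{[i]}\|_2$ used in \eqref{defFbar} can be $\geq 1$ for a Schur-stable matrix. In that case $\bar F\phi_{\max}^k\bar x$ does not decay, the hypothesis of Lemma \ref{lemma2} fails, and your chain of inequalities yields no finite $N$. The paper handles this second case by exploiting the common quadratic Lyapunov matrix $P$: passing to the coordinates $x'=P^{1/2}x$, with $\mathbf{S}_0'=P^{1/2}\mathbf{S}_0$, $\mathbf{D}'=P^{1/2}\mathbf{D}$ and $\Phi'=P^{1/2}\Phi P^{-1/2}$, one gets $\phi'_{\max}=\max_i\|P^{1/2}\phi^{[i]}P^{-1/2}\|_2<1$, applies the case-(a) argument there, and maps the finitely determined set back via $P^{-1/2}$. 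You need to add this transformation (or an equivalent device, e.g.\ replacing $\phi_{\max}^k$ by a bound on norms of $k$-fold products that does decay) for the proof to cover the full statement.
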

  
\begin{proof}
{{We show that $\exists$ a finite integer $N\geq 0$ for which $\mathbf{S}_0\subseteq\mathbf{S}_k$ $\forall k\geq N$, which is sufficient to prove the theorem statement.}} Let $z\in\mathbf{S}_0$. For some integer $k\geq 0$, we can write 
\begin{align}
||F_kz||_\infty \leq  ||F_kz||_2 &\leq\max_{{{v_{k-1}\in\mathbb{I}_1^L}}} \;|| F_{k-1}\phi^{[v_{k-1}]} z ||_2 \;\left( \text{from \eqref{Ak}} \right)\nonumber\\
   \leq & \max_{{{v_{k-1},\;v_{k-2},\;...,\;v_0\;\in\mathbb{I}_1^L}}} \;|| F_{0}\phi^{[v_{k-1}]}\phi^{[v_{k-2}]}...\phi^{[v_{0}]} z ||_2 \nonumber\\
  \leq & \;\bar{F} \phi_{\max}^k \bar{x} \;\;\;\;\left(\text{from \eqref{defFbar}}\right) .\label{ubonzbar}
\end{align}
  The largest element in $F_kz$ is $||F_kz||_\infty$ which is upper bounded by $\bar{F} \phi_{\max}^k \bar{x} $. Also, note that by definition, $f_{\min}\leq \min_{i} f_k(i)$ $\forall k\in\mathbb{I}_0^\infty$, where $i\in\mathbb{I}_1^{L^k(n_x+n_u)}$. If $\bar{F} \phi_{\max}^k \bar{x}\leq f_{\min}$ for some $k$, then $z\in\mathbf{S}_k$, implying $\mathbf{S}_0\subseteq\mathbf{S}_k$. To investigate if $\exists$ a finite $k$ satisfying $\bar{F} \phi_{\max}^k \bar{x}\leq f_{\min}$, consider the two cases given below. 

\emph{(a)} {{When $\phi_{\max}<1$, $\bar{F} \phi_{\max}^k \bar{x}\leq f_{\min}\Rightarrow k\geq \frac{\ln{f_{\min}-\ln{\left(\bar{F}\bar{x}\right)}}}{\ln{\phi_{\max}}}$}} which is finite since $f_{\min}$ is finite and $\mathbf{S}_0$ is a polytope. Let $N \triangleq \left\lfloor \frac{\ln{f_{\min}-\ln{\left(\bar{F}\bar{x}\right)}}}{\ln{\phi_{\max}}} \right\rfloor$. From Lemma \ref{lemma2}, we know that $N\geq 0$. {{Therefore, $\exists$ a finite $N$ such that $\mathbf{S}_0\subseteq\mathbf{S}_k\;\forall k\geq N$,}} and $\mathbf{S}=\bigcap_{k=0}^\infty\mathbf{S}_k=\bigcap_{k=0}^N\mathbf{S}_k=\bar{\mathbf{S}}$. Note that owing to the finite number of intersections, the set $\bar{\mathbf{S}}=\mathbf{S}$ is a polytope.

\emph{(b)} {{When $\phi_{\max}\geq 1$, $\ln{\phi_{\max}}\geq 0$ due to which $\bar{F} \phi_{\max}^k \bar{x}\leq f_{\min}\not\Rightarrow k\geq \frac{\ln{f_{\min}-\ln{\left(\bar{F}\bar{x}\right)}}}{\ln{\phi_{\max}}}$.}} However, since $K$ is quadratically stabilizing, $\exists$ a Lyapunov function $V_t(x_t)=x_t^\intercal Px_t$ for the dynamics $x_{t+1}=\phi_t x_t$ $\forall \phi_t\in\Phi$. {{Let $(\cdot)'$ represent a transformed setup}} obtained using $P$, i.e., the transformed state $x'_t=P^{1/2}x_t$ and sets {{$\mathbf{S}'_0=P^{1/2}\mathbf{S}_0$, $\mathbf{D}'=P^{1/2}\mathbf{D}$ and $\Phi'=P^{1/2}\Phi P^{-1/2}$}} resulting in $\phi'_{\max}\triangleq \max_{i\in\mathbb{I}_1^L}||P^{1/2}\phi^{[i]}P^{-1/2}||_2<1$. Now, similar to the proof in \emph{(a)}, {{$\exists$ a finite $N'$ and a set ${\bar{\mathbf{S}}}'=\bigcap_{k=0}^{N'}\mathbf{S}_k'=\mathbf{S}'$, the MARPI set in the transformed setup. Since $\mathbf{S}'$ can be finitely determined, $\exists$ a finite $N$ in the original setup such that the set $\mathbf{S}=P^{-1/2}{{\mathbf{S}}}'$ can also be finitely determined.}} 
\end{proof}

Using Theorem \ref{theo3}, an implementable approach for computing $\mathbf{S}$ would be to perform the $N$ intersections by finding the sets $\mathbf{S}_k$ $\forall k\in\mathbb{I}_0^N$. The brute-force method to carry this out is to obtain $F^*\triangleq\begin{bmatrix}
        F_0^\intercal & F_1^\intercal &...&F_N^\intercal
    \end{bmatrix}^\intercal$ and $f^*\triangleq \begin{bmatrix}
        f_0^\intercal &f_1^\intercal &... &f_N^\intercal
    \end{bmatrix}^\intercal,$ with the intersection given by $\mathbf{S}=\left\{ x\;|\;F^*x\leq f^*  \right\}$. However, even with a finite $N$, the number of rows in $F_k$ and $f_k$ grow exponentially with $k$ (see \eqref{Ak}, \eqref{bk}), resulting in high computational burden. 

{{Also, note that a finite $N$ is used in Theorem \ref{theo3} for the sake of proving that $\mathbf{S}$ is obtainable with finite intersections.}} It may be possible to obtain the set $\mathbf{S}$ with less than $N$ number of intersections since the proof of Theorem \ref{theo3} is performed using a conservative upper bound of the elements of $F_kz$ and a conservative lower bound of the elements of $f_k$. Instead of using $F^*$ and $f^*$, motivated by \cite{pluymers2005efficient}, we propose a more elegant and less computationally burdensome algorithm for computing the set $\mathbf{S}$. 

\section{Tractable method to compute the MARPI Set}\label{sectrac}
In this section, we present a tractable algorithm for computing the MARPI set {{without knowing the value of $N$.}} We begin by developing a stopping criterion for performing the finite set intersections.

\begin{theo}\label{theo4}
{{Suppose the conditions for the existence of $\mathbf{S}\neq\emptyset$ given in Theorem \ref{theo2} are satisfied. Let}} ${\tilde{\mathbf{S}}}\triangleq\bigcap_{{{k}}{\color{black}{=0}}}^{{r}} \mathbf{S}_k$, {{where $r\in\mathbb{I}_0^N$}}. {{The set $\tilde{\mathbf{S}}\subseteq \mathbf{S}_{{{r}}+1}$ iff $\tilde{\mathbf{S}}=\mathbf{S}$}}.
\end{theo}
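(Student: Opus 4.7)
\medskip

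\noindent\textbf{Proof plan.} The statement is an \emph{iff} biconditional, so I would split it into the two directions, with the reverse direction being essentially trivial and the forward direction carrying all the content.

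\medskip

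\noindent\emph{($\Leftarrow$) direction.} If $\tilde{\mathbf{S}}=\mathbf{S}$, then since $\mathbf{S}=\bigcap_{k=0}^\infty\mathbf{S}_k$ by Theorem \ref{theo1}, we have $\mathbf{S}\subseteq \mathbf{S}_{r+1}$, so $\tilde{\mathbf{S}}\subseteq\mathbf{S}_{r+1}$. This direction needs only a single line.

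\medskip

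\noindent\emph{($\Rightarrow$) direction.} The approach is to exploit the ``backward-reachable'' interpretation of $\mathbf{S}_k$ to show that $\tilde{\mathbf{S}}$ is itself an admissible RPI set, and then invoke the maximality of $\mathbf{S}$. The first step is to record the one-step shift property implicit in the definition \eqref{noco_Sk}: for every $k\in\mathbb{I}_0^\infty$,
\begin{equation*}
x\in\mathbf{S}_{k+1}\;\Longleftrightarrow\; \phi x+d\in\mathbf{S}_k\quad\forall (\phi,d)\in\Phi\times\mathbf{D}.
\end{equation*}
This is the key technical ingredient and follows directly from reading off \eqref{noco_Sk} with $t_0$ peeled off (or, equivalently, from the recursion of Lemma \ref{lemma1}, since $\mathbf{S}_{k+1}$ is obtained by pulling $\mathbf{S}_k$ one step backward through the dynamics \eqref{invdyn} and shrinking by the support-function term $\zeta_\mathbf{D}(F_k)$).

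\medskip

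Given this shift property, the bulk of the argument is the following claim: under the hypothesis $\tilde{\mathbf{S}}\subseteq \mathbf{S}_{r+1}$, the set $\tilde{\mathbf{S}}$ is admissible RPI for \eqref{invdyn}. Admissibility ($\tilde{\mathbf{S}}\subseteq\mathbf{S}_0$) is immediate because $\mathbf{S}_0$ appears in the intersection defining $\tilde{\mathbf{S}}$. For invariance, I take an arbitrary $x\in\tilde{\mathbf{S}}$, fix arbitrary $(\phi,d)\in\Phi\times\mathbf{D}$, and show $\phi x+d\in\tilde{\mathbf{S}}$. By assumption together with the definition of $\tilde{\mathbf{S}}$, the point $x$ lies in $\mathbf{S}_k$ for all $k=0,1,\dots,r+1$. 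Applying the shift property to each membership $x\in\mathbf{S}_{k+1}$ with $k=0,1,\dots,r$ yields $\phi x+d\in\mathbf{S}_k$ for $k=0,1,\dots,r$; intersecting these gives $\phi x+d\in\bigcap_{k=0}^{r}\mathbf{S}_k=\tilde{\mathbf{S}}$, as required.

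\medskip

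The final step closes the loop by maximality: since $\mathbf{S}$ is the MARPI set by Theorem \ref{theo1}, every admissible RPI set is contained in $\mathbf{S}$, so $\tilde{\mathbf{S}}\subseteq\mathbf{S}$. The reverse inclusion $\mathbf{S}\subseteq\tilde{\mathbf{S}}$ is immediate from $\mathbf{S}=\bigcap_{k=0}^\infty\mathbf{S}_k\subseteq \bigcap_{k=0}^r\mathbf{S}_k=\tilde{\mathbf{S}}$. Hence $\tilde{\mathbf{S}}=\mathbf{S}$. I do not anticipate a major obstacle: the only delicate point is cleanly justifying the one-step shift property, which is a straightforward unpacking of the halfspace definition of $\mathbf{S}_k$ but should be stated explicitly (rather than silently used) to make the induction-like step of propagating membership from $x$ to $\phi x+d$ unambiguous.
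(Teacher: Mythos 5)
Your proof is correct, and its first half coincides in substance with the paper's: the ``one-step shift property'' $x\in\mathbf{S}_{k+1}\Rightarrow \phi x+d\in\mathbf{S}_k$ for all $(\phi,d)\in\Phi\times\mathbf{D}$ is exactly the pointwise form of the paper's chain of set inclusions $\tilde{\mathbf{S}}\subseteq\mathbf{S}_j\Rightarrow\phi\tilde{\mathbf{S}}\oplus\mathbf{D}\subseteq\mathbf{S}_{j-1}$, and both arguments use it to conclude that $\tilde{\mathbf{S}}\subseteq\mathbf{S}_{r+1}$ forces $\phi\tilde{\mathbf{S}}\oplus\mathbf{D}\subseteq\tilde{\mathbf{S}}$ for all $\phi\in\Phi$, i.e., that $\tilde{\mathbf{S}}$ is an admissible RPI set. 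Where you diverge is in closing the argument: you invoke the maximality of $\mathbf{S}$ established in Theorem \ref{theo1} to get $\tilde{\mathbf{S}}\subseteq\mathbf{S}$, and combine it with the trivial inclusion $\mathbf{S}\subseteq\tilde{\mathbf{S}}$; the paper instead propagates the invariance forward through the Minkowski-sum chain in \eqref{maggi2}--\eqref{maggi1} to show directly that $\tilde{\mathbf{S}}\subseteq\mathbf{S}_k$ for every $k\geq r+2$, hence $\tilde{\mathbf{S}}=\bigcap_{k=0}^{\infty}\mathbf{S}_k=\mathbf{S}$. Your route is shorter and reuses prior results cleanly; the paper's route is self-contained at this step and makes explicit the fact that all higher-index constraints $\mathbf{S}_k$ become redundant, which is the operational content of the stopping criterion exploited by Algorithm \ref{algo1} and Corollary \ref{coro2}. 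Your handling of the $(\Leftarrow)$ direction matches the paper's one-line argument. The one point to make explicit in a final write-up is the verification of the shift property itself (peeling $\phi_{t_0}$ off \eqref{noco_Sk} and absorbing the worst-case disturbance of the first step into the term $\zeta_{\mathbf{D}}(F_0\phi_{t_{k}}\cdots\phi_{t_1})$), which you correctly flag as the only delicate step.
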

\begin{proof}
Using the definitions of $\mathbf{S}_k$ in \eqref{noco_Sk} and $\tilde{\mathbf{S}}$, we know that {{$\tilde{\mathbf{S}}\subseteq \mathbf{S}_1$ implies all elements of $\tilde{\mathbf{S}}$ enter $\mathbf{S}_0$ after one time step evolution following the dynamics in \eqref{invdyn},}} i.e., ${{\phi_{t_0}\tilde{\mathbf{S}} \oplus\mathbf{D} }}\subseteq\mathbf{S}_0$ $\forall \phi_{t_0}\in\Phi$. Also, $\tilde{\mathbf{S}}\subseteq \mathbf{S}_2$ $\Rightarrow \phi_{t_1}\phi_{t_0}\tilde{\mathbf{S}}\oplus \phi_{t_1}\mathbf{D}\oplus \mathbf{D}\subseteq \mathbf{S}_0 \Rightarrow \phi_{t_1} \left( \phi_{t_0}\tilde{\mathbf{S}}\oplus \mathbf{D} \right)\oplus\mathbf{D}\subseteq \mathbf{S}_0\Rightarrow \phi_{t_0}\tilde{\mathbf{S}}\oplus \mathbf{D}\subseteq\mathbf{S}_1 $ (by definition of $\mathbf{S}_1$). Similarly, $\tilde{\mathbf{S}}\subseteq \mathbf{S}_r\Rightarrow  \phi_{t_{r-1}}\phi_{t_{r-2}}...\phi_{t_{1}} \left( \phi_{t_0}\tilde{\mathbf{S}}\oplus \mathbf{D} \right) \oplus \phi_{t_{r-1}}...\phi_{t_{1}}\mathbf{D} \oplus ... \oplus\phi_{t_{r-1}} \mathbf{D}\oplus \mathbf{D}\subseteq\mathbf{S}_0 \Rightarrow \phi_{t_0}\tilde{\mathbf{S}}\oplus \mathbf{D}\subseteq \mathbf{S}_{r-1}$ $\forall \phi_{t_{r-1}},\;\phi_{t_{r-2}},...,\;\phi_{t_0}\in\Phi$. {{To summarize, $\tilde{\mathbf{S}}=\bigcap_{k=0}^r \mathbf{S}_k \Rightarrow\phi \tilde{\mathbf{S}}\oplus\mathbf{D}\subseteq \bigcap_{k=0}^{r-1}\mathbf{S}_k$ $\forall \phi\in\Phi$.}}

Now, following the same analysis, if $\tilde{\mathbf{S}}\subseteq \mathbf{S}_{r+1}$, then $\phi \tilde{\mathbf{S}}\oplus \mathbf{D}\subseteq \mathbf{S}_r$ $\forall \phi\in\Phi$ implying $\phi\tilde{\mathbf{S}}\oplus \mathbf{D}\subseteq \bigcap_{k=0}^r \mathbf{S}_k=\tilde{\mathbf{S}}$ $\forall \phi\in\Phi$. {{Using $\phi\tilde{\mathbf{S}}\oplus\mathbf{D}\subseteq \tilde{\mathbf{S}}$ $\forall\phi\in \Phi$,}} we can write $\forall k\in\mathbb{I}_{r+2}^\infty$, 
\begin{align}
&\;\;\;\;\; \phi_{t_{k-1}}...\phi_{t_0}\tilde{\mathbf{S}}\oplus  \phi_{t_{k-1}} ... \phi_{t_1}  \mathbf{D}\oplus...\oplus \phi_{t_{k-1}}\mathbf{D}\oplus\mathbf{D}\label{maggi2}\\
&=\phi_{t_{k-1}}...\phi_{t_1}\left(\phi_{t_0}\tilde{\mathbf{S}}\oplus\mathbf{D}\right) \oplus \phi_{t_{k-1}} ... \phi_{t_2}  \mathbf{D} \oplus ...\oplus \phi_{t_{k-1}}\mathbf{D}\oplus\mathbf{D}\nonumber\\
&\subseteq \phi_{t_{k-1}} ... \phi_{t_1}\tilde{\mathbf{S}}\oplus  \phi_{t_{k-1}} ... \phi_{t_2}  \mathbf{D}\oplus...\oplus \phi_{t_{k-1}}\mathbf{D}\oplus\mathbf{D} \nonumber\\
&=\phi_{t_{k-1}} ... \phi_{t_2}\left(\phi_{t_1}\tilde{\mathbf{S}}\oplus\mathbf{D} \right)\oplus  \phi_{t_{k-1}} ... \phi_{t_3}  \mathbf{D} \oplus...\oplus \phi_{t_{k-1}}\mathbf{D}\oplus\mathbf{D} \nonumber\\
&\subseteq...=\phi_{t_{k-1}}\left(\phi_{t_{k-2}}\tilde{\mathbf{S}}\oplus\mathbf{D} \right)\oplus \mathbf{D}\subseteq \phi_{t_{k-1}}\tilde{\mathbf{S}}\oplus \mathbf{D} \subseteq\tilde{\mathbf{S}} \subseteq \mathbf{S}_0 ,\label{maggi1}
\end{align}
implying $\tilde{\mathbf{S}}\subseteq \mathbf{S}_k\;\;\forall k\in\mathbb{I}_{r+2}^\infty$ since in $k$ steps the elements of $\tilde{\mathbf{S}}$ enter $\mathbf{S}_0$ (see \eqref{maggi2} and \eqref{maggi1}). Therefore, $\tilde{\mathbf{S}}\subseteq \mathbf{S}_{r+1}$ implies $\tilde{\mathbf{S}}=\bigcap_{k=0}^r \mathbf{S}_k=\bigcap_{k=0}^\infty \mathbf{S}_k=\mathbf{S}$ (since $\tilde{\mathbf{S}}\subseteq \mathbf{S}_k\;\;\forall k\in\mathbb{I}_{r+1}^\infty$).

Also, $\tilde{\mathbf{S}}=\mathbf{S}\Rightarrow \tilde{\mathbf{S}}=\tilde{\mathbf{S}}\cap \mathbf{S}_{r+1}\cap\mathbf{S}_{r+2}\cap...\Rightarrow \tilde{\mathbf{S}}\subseteq \mathbf{S}_k\;\forall k\in\mathbb{I}_{r+1}^\infty$, which completes the proof.
\end{proof}
  
To compute $\mathbf{S}$, initialize a set $\hat{\mathbf{S}}=\mathbf{S}_0$. Define the precursor set of $\hat{\mathbf{S}}$ as $\text{Pre}\left( \hat{\mathbf{S}} \right)\triangleq \{ x\;|\;\phi^{[i]}x+d\in \hat{\mathbf{S}},\;\;\forall(i,d)\in\mathbb{I}_1^L\times\mathbf{D} \}$. Clearly,
$\text{Pre}\left(\hat{\mathbf{S}} \right)=\text{Pre}\left({\mathbf{S}_0}\right)=\mathbf{S}_1$. At iteration 1, update $\hat{\mathbf{S}}\leftarrow\hat{\mathbf{S}}\cap \text{Pre}\left(\hat{\mathbf{S}} \right) $.  The updated $\hat{\mathbf{S}}$ is the maximal set of all states in $\mathbf{S}_0$ that stay in $\mathbf{S}_0$ after one time step evolution following \eqref{invdyn}. Now, $\text{Pre}\left(\hat{\mathbf{S}}\right)\subseteq \mathbf{S}_2$. Updating $\hat{\mathbf{S}}$ in the $2^\text{nd}$ iteration as $\hat{\mathbf{S}}\leftarrow \hat{\mathbf{S}}\cap \text{Pre}\left(\hat{\mathbf{S}}\right)$ gives the maximal set of all states in $\mathbf{S}_0$ that stay in $\mathbf{S}_0$ after one as well as two time steps evolution. For the newly obtained set $\hat{\mathbf{S}}$, we have, $\text{Pre}\left(\hat{\mathbf{S}}\right)\subseteq\mathbf{S}_3$. 

In general, the $\text{Pre}\left(\hat{\mathbf{S}}\right)\subseteq\mathbf{S}_k$ where $\hat{\mathbf{S}}$ is obtained at iteration $k-1$. The recursive process of reassigning $\hat{\mathbf{S}}\leftarrow \hat{\mathbf{S}}\cap \text{Pre}\left(\hat{\mathbf{S}}\right)$ is continued until $\hat{\mathbf{S}}\subseteq \text{Pre}\left(\hat{\mathbf{S}}\right)$. The set $\hat{\mathbf{S}}$ obtained when the condition $\hat{\mathbf{S}}\subseteq \text{Pre}\left(\hat{\mathbf{S}}\right)$ is satisfied yields the MARPI set $\mathbf{S}$ as proved in the following corollary.
\begin{coro}\label{coro2}
If $\hat{\mathbf{S}}\subseteq \text{Pre}\left(\hat{\mathbf{S}}\right)$, then $\hat{\mathbf{S}}=\mathbf{S}$.
\end{coro}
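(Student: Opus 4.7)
The plan is to reduce Corollary \ref{coro2} to a direct application of Theorem \ref{theo4}. First, I would formalize the iterative procedure: writing $\hat{\mathbf{S}}^{(0)} = \mathbf{S}_0$ and $\hat{\mathbf{S}}^{(r+1)} = \hat{\mathbf{S}}^{(r)} \cap \text{Pre}(\hat{\mathbf{S}}^{(r)})$, a straightforward induction on $r$ shows that $\hat{\mathbf{S}}^{(r)} = \bigcap_{k=0}^{r} \mathbf{S}_k$. The inductive step rests on two elementary properties of $\text{Pre}$: distributivity over intersection (immediate from the universal quantifiers in its definition), and $\text{Pre}(\mathbf{S}_k) = \mathbf{S}_{k+1}$, which follows from the backward-reachability interpretation of \eqref{noco_Sk} together with the recursion in Lemma \ref{lemma1}.

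Second, let $r$ denote the iteration at which the stopping criterion $\hat{\mathbf{S}}^{(r)} \subseteq \text{Pre}(\hat{\mathbf{S}}^{(r)})$ is first met; its existence, together with the bound $r \leq N$, is guaranteed by Theorem \ref{theo3}. Since $\hat{\mathbf{S}}^{(r)} = \bigcap_{k=0}^{r} \mathbf{S}_k \subseteq \mathbf{S}_r$, monotonicity of $\text{Pre}$ yields $\text{Pre}(\hat{\mathbf{S}}^{(r)}) \subseteq \text{Pre}(\mathbf{S}_r) = \mathbf{S}_{r+1}$. Chaining the stopping inclusion with this gives $\hat{\mathbf{S}}^{(r)} \subseteq \mathbf{S}_{r+1}$. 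Setting $\tilde{\mathbf{S}} = \hat{\mathbf{S}}^{(r)} = \bigcap_{k=0}^{r} \mathbf{S}_k$, Theorem \ref{theo4} asserts that $\tilde{\mathbf{S}} \subseteq \mathbf{S}_{r+1}$ is equivalent to $\tilde{\mathbf{S}} = \mathbf{S}$, and so $\hat{\mathbf{S}} = \mathbf{S}$, as claimed.

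The only non-routine step, and what I expect to be the main obstacle, is the first one: rigorously identifying the $\text{Pre}$-based iteration used in the algorithm with the halfspace intersection $\bigcap_{k=0}^{r} \mathbf{S}_k$ that features in the earlier results. This requires verifying carefully that $\text{Pre}$ commutes with intersection and that $\text{Pre}(\mathbf{S}_k) = \mathbf{S}_{k+1}$; both are easy individually, but combining them cleanly is the substantive content of the argument. Once that equivalence is in place, the corollary follows at once from monotonicity of the precursor operator and Theorem \ref{theo4}.
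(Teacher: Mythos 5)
Your proposal is correct and follows essentially the same route as the paper: identify the iterate $\hat{\mathbf{S}}$ at step $r$ with $\bigcap_{k=0}^{r}\mathbf{S}_k$, use $\text{Pre}\bigl(\hat{\mathbf{S}}\bigr)\subseteq\mathbf{S}_{r+1}$ to turn the stopping criterion into $\hat{\mathbf{S}}\subseteq\mathbf{S}_{r+1}$, and invoke Theorem \ref{theo4}. You merely make explicit (via induction, distributivity of $\text{Pre}$ over intersections, and $\text{Pre}(\mathbf{S}_k)=\mathbf{S}_{k+1}$) what the paper asserts ``by construction'' and in the discussion preceding the corollary.
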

\begin{proof}
   Denote the set $\hat{\mathbf{S}}$ obtained at iteration $\bar{N}$ by $\hat{\mathbf{S}}_{\left(\bar{N}\right)}$. By construction, $\hat{\mathbf{S}}_{\left(\bar{N}\right)}=\bigcap_{k=0}^{\bar{N}}\mathbf{S}_k$. If $\hat{\mathbf{S}}_{\left(\bar{N}\right)}\subseteq \text{Pre}\left(\hat{\mathbf{S}}_{\left(\bar{N}\right)}\right)$, then \newpage \vspace*{-0.2cm} \noindent$\hat{\mathbf{S}}_{\left(\bar{N}\right)}\subseteq \mathbf{S}_{\bar{N}+1}$ $\left(\text{since } \text{Pre}\left(\hat{\mathbf{S}}_{\left(\bar{N}\right)}\right)\subseteq \mathbf{S}_{\bar{N}+1}\right)$. Therefore, $\hat{\mathbf{S}}_{\left(\bar{N}\right)}=\bigcap_{k=0}^{\bar{N}}\mathbf{S}_k$ and $\hat{\mathbf{S}}_{\left(\bar{N}\right)}\subseteq \mathbf{S}_{\bar{N}+1}$. This is similar to the statement of Theorem \ref{theo4}, thus, yielding $\hat{\mathbf{S}}_{\left(\bar{N}\right)}=\mathbf{S}$, the MARPI set.  
\end{proof}

\begin{algorithm}[b!]
\caption{Computing the MARPI set $\mathbf{S}$}\label{algo1}
\begin{algorithmic}[1]
\REQUIRE $F_0$ and $f_0$ (defined in \eqref{setZ}), $\phi^{[j]}\;\forall j\in\mathbb{I}_1^L$ (defined in \eqref{setPhi}) and $\mathbf{D}$ (defined in \eqref{setD}).\;
\ENSURE $\mathbf{S}$ 
\STATE $\hat{F}\triangleq F_0$ and $\hat{f}\triangleq f_0$.
\REPEAT
\STATE $\hat{F}_1\triangleq\begin{bmatrix}
    \hat{F}\phi^{[1]}\\\hat{F}\phi^{[2]}\\\vdots\\\hat{F}\phi^{[L]}
\end{bmatrix}$, $\hat{f}_1\triangleq \mathds{1}_L \otimes \left( 
            \hat{f}-\zeta_\mathbf{D}(\hat{F})\right)
         $, $\rho_0\triangleq $ number of rows in $\hat{F}$ and $\rho_1\triangleq$ number of rows in $\hat{F}_1$.
    \FOR{$i=1$ \TO $\rho_1$} 
       \STATE Solve the following LP problem to find $\gamma$. 
       \begin{align}\gamma\triangleq&\max_{z\in \mathbb{R}^n}\;\;(\alpha z- \beta) \;\;\;\;
       \text{subject to }\;\hat{F} z\leq\hat{f},
       \label{LPopt}\end{align}where $\alpha\triangleq\hat{F}_1(i)$ and $\beta\triangleq\hat{f}_1(i)$.
       \IF{$\gamma >0$}    \STATE $\hat{F}\gets \begin{bmatrix}
           \hat{F}\\ \alpha
       \end{bmatrix}$ and $\hat{f}\gets \begin{bmatrix}
           \hat{f}\\ \beta
       \end{bmatrix}$
       \ENDIF
   \ENDFOR
   \UNTIL{number of rows in $\hat{F}=\rho_0$.}
   \STATE $\mathbf{S}\triangleq\{x\;|\;\hat{F}x\leq \hat{f}\;\}$
\end{algorithmic}
\end{algorithm}
The steps to implement this process of updating $\hat{\mathbf{S}}$ are given in Algorithm \ref{algo1}. This technique also involves repeated intersections. However, it uses the $\text{Pre}(\cdot)$ operation that computes only the one-step backward reachable set at each iterative step, thus, reducing the amount of computation. In fact, the computational burden can be further reduced by running a simple linear programming (LP) problem (see \eqref{LPopt} in Algorithm \ref{algo1}) that takes into account only the irredundant halfspaces of the intersection at each iteration.

\begin{remark}
{{The value of $r$ or $N$ is not required in Algorithm \ref{algo1}.}} Step 3 computes the halfspaces of the precursor set whereas the intersection is performed in Steps 4 to 9. Due to the maximization in \eqref{LPopt}, a $\gamma\leq 0\Rightarrow \alpha z \leq \beta$ $\forall z\in\{ x\;|\;\hat{F}x\leq\hat{f}\;\}$, and hence, that $(\alpha,\;\beta)$ pair is not considered since it forms a redundant halfspace in the intersection.
\end{remark}

\begin{remark}{{Instead of (or in addition to) the state and input constraints in \eqref{systemconstraints}, it is possible to have output constraints.}} Let the output be $y_t=Cx_t+Du_t$, where $y_t\in\mathbb{R}^o$, $C\in\mathbb{R}^{o\times n}$ and $D\in\mathbb{R}^{o\times m}$ with known $C$ and $D$. If the invariant set has to satisfy only the output constraint $y_t\in\mathbf{Y}\triangleq\left\{  y\;|\;F_yy\leq f_y,\;F_y\in\mathbb{R}^{n_y\times o},\; f_y\in\mathbb{R}^{n_y} \right\}$, then Algorithm \ref{algo1} can be used with $F_0\triangleq F_y(C+DK)$ and $f_0\triangleq f_y$, and the resulting set $\mathbf{S}$ is the MOARPI set. In presence of both output and input constraints, $F_0\triangleq \begin{bmatrix}
    F_y(C+DK)\\F_uK
\end{bmatrix}$ and $f_0\triangleq \begin{bmatrix}
    f_y\\f_u
\end{bmatrix}$.
\end{remark}
\begin{remark}
In MPC, the terminal set is typically used to establish recursive feasibility and stability \cite{kerrigan2000invariant,lorenzen2019robust,dey2022adaptive}. To reduce the prediction horizon length and consequently, the computational burden in robust MPC, it is desired that the set is MARPI. An algorithm for generating such a terminal set is mentioned in \cite[Appendix~A.1]{lorenzen2019robust} for systems with both additive and parametric uncertainties. However, the algorithm depends on the choice of a basic polytope ($\mathbb{X}_0$ in \cite{lorenzen2019robust}) for robust tubes \cite{langson2004robust} and may result in a subset of the actual MARPI set. A better approach may be to first find the MARPI set, and then use it as the terminal set as well as the basic polytope for generating the tubes. 
\end{remark}
\begin{remark}
{{The proposed theory is trivially extendable for a time-varying $K=K_t\in\mathbf{K}$ that is common for all elements in $\Psi$ at time $t$ (or a parameter-dependent $K=K(\psi_t)\in\mathbf{K}$), where $\mathbf{K}$ is a known polytope, provided the resulting set $\Phi\ni A_t+B_tK,$ $\forall \begin{bmatrix}
        A_t & B_t
\end{bmatrix}\in\Psi$ and $\forall K\in\mathbf{K}$, is a convex polytope with each element being Schur stable and $\phi_{\max}<1$. }}
\end{remark}

\section{Simulation Result}
 \begin{figure}[t!]
      \vspace{0.23cm}\centering
     \framebox{\parbox{3in}{\includegraphics[scale=0.418]{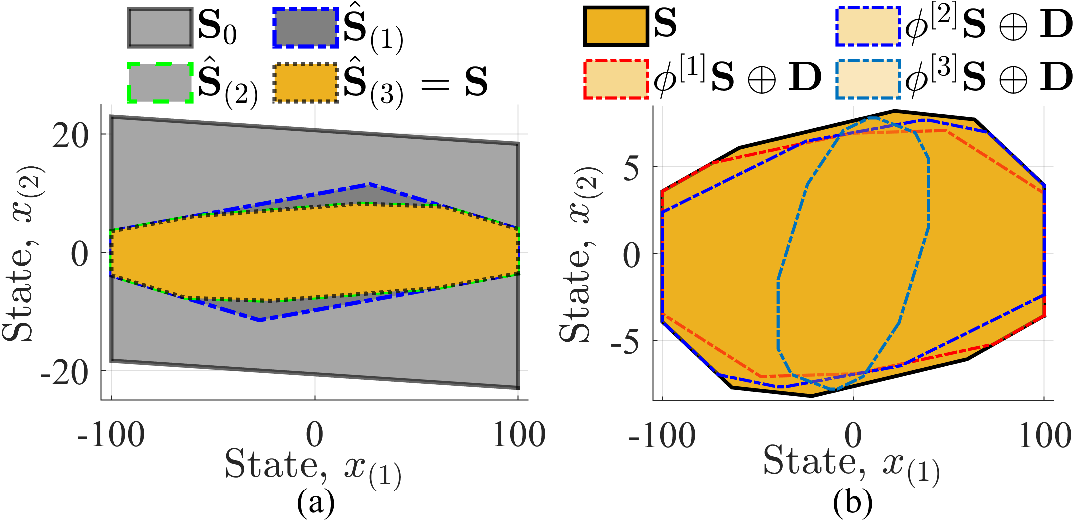}}}
      \caption{Here $x=[x_{(1)}\;\;x_{(2)} ]^\intercal$. (a) The sets $\hat{\mathbf{S}}_{(i)}$ converge to the MARPI set $\mathbf{S}$ in $2$ iterations. No new halfspaces are added in the $3^\text{rd}$ iteration. (b) The invariance property of $\mathbf{S}$ is verified since $\phi^{[j]}\mathbf{S}\oplus \mathbf{D}\subset \mathbf{S}$ $\forall j\in\mathbb{I}_1^3$.}       \label{fig1}
   \end{figure}
Consider the $2^\text{nd}$ order LTV system: $x_{t+1}=A_tx_t+B_tu_t+d_t$ with $x_t,\;d_t\in\mathbb{R}^2$, $u_t\in\mathbb{R}$, $||x_t||_\infty \leq 100$, $||u_t||_\infty \leq 100$, $||d_t||_\infty\leq 2 $ and $\begin{bmatrix}
    A_t&B_t
\end{bmatrix}\in \Psi=\text{conv}\left( \left\{ \psi^{[1]},\;\psi^{[2]},\;\psi^{[3]}\right\} \right)$\footnote{$\psi^{[1]}=\begin{bmatrix}
    0.6 & 2 & -1\\-0.1 & -6.3 & -1.2
\end{bmatrix}$, $\psi^{[2]}=\begin{bmatrix}
    0.9 & 2.11 & 2.5\\-0.13 & -5.2 & -0.96
\end{bmatrix}$ and $\psi^{[3]}=\begin{bmatrix}
    -0.3 & 2.05 & 0\\-0.12 & -4.12 & -1
\end{bmatrix}.$} $\forall t\in\mathbb{I}_0^\infty$. 

Figure \ref{fig1} is obtained with a quadratically stabilizing feedback gain $K=\begin{bmatrix}
    -0.1112 & -4.8498
\end{bmatrix}$. The sets $\hat{\mathbf{S}}$ at each iterative step converging to the set $\mathbf{S}$ are shown in Fig. \ref{fig1}(a). The algorithm converges in $3$ iterations and the set $\mathbf{S}$ can be represented using only $10$ irredundant halfspaces. In Fig. \ref{fig1}(b), we see that $\phi^{[j]}\mathbf{S}\oplus\mathbf{D}\subset\mathbf{S}$ $\forall j\in\mathbb{I}_1^3$ which verifies the robust positive invariance property of $\mathbf{S}$.  
\section{Conclusion}
The paper extends the results of \cite{kolmanovsky1995maximal,pluymers2005efficient} to MARPI set for state and input constrained discrete-time LTV systems having both {{polytopic}} parametric and additive uncertainties. A {{verifiable}} necessary and sufficient condition for the existence of the non-empty MARPI set is {{obtained using the convex hull of the mRPI set. The MARPI set, if exists, is shown to be computable in finite time using a suitable stopping criterion}} leading to a tractable algorithm with possible applications in robust MPC, safe controller design using control barrier functions, etc. Future work will consider finding MARPI sets for systems with nonconvex polyhedral constraints \cite{perez2011maximal}, and also enlarging the domain of attraction for MPC by computing precursor sets which need not be invariant \cite{limon2005enlarging}.

\section*{Appendix}


\subsection{Proof of Lemma \ref{lemma1}}\label{appen1}
{{Let $\mathbb{S}_1\triangleq \{ x\;|\; F_1 x\leq f_1  \}=\{ x\;|\; F_0\phi^{[i]}x\leq f_0 -\zeta_{\mathbf{D}}(F_0),\;\;\forall i\in\mathbb{I}_1^L\} $. Since $\mathbf{S}_1$ is defined $\forall\phi\in\Phi$, we have $\mathbb{S}_1\subseteq \mathbf{S}_1$. Next, assume $\mathbf{S}_1\not\subseteq \mathbb{S}_1$. Then, $\exists$ a $z\in\mathbb{S}_1\backslash \mathbf{S}_1$ that satisfies $F_0\phi^{[i]}z\leq f_0- \zeta_{\mathbf{D}}(F_0)$ $\forall i\in\mathbb{I}_1^L$. However, each $\phi\in\Phi$ can be represented as the convex hull of the vertices of $\Phi$, i.e., $\exists\; \lambda^{[i]}\in[0,1]$ for $i=1,2,...,L$ such that $\phi=\sum_{i=1}^L \lambda^{[i]}\phi^{[i]}$ and $\sum_{i=1}^L\lambda^{[i]}=1$. This allows us to write $ \sum_{i=1}^L \lambda^{[i]} \left(F_0  \phi^{[i]} z \right)\leq \sum_{i=1}^L \lambda^{[i]} \left( f_0-  \zeta_{\mathbf{D}}(F_0) \right)\Rightarrow F_0 \phi z \leq f_0 -\zeta_\mathbf{D} (F_0)\Rightarrow z\in\mathbf{S}_1$. This contradicts our assumption. Therefore, $\mathbf{S}_1\subseteq\mathbb{S}_1$ implying $ \mathbb{S}_1=\mathbf{S}_1$. Continuing similarly, it can be shown that $\mathbb{S}_k\triangleq \{ x\;|\;F_kx\leq f_k\}=\{ x\;|\; F_{k-1}\phi^{[i]}x\leq f_{k-1}-\zeta_{\mathbf{D}}(F_{k-1}),\;\forall i\in\mathbb{I}_1^L\}=\mathbf{S}_k$ $\forall k\in\mathbb{I}_2^\infty$, which concludes the proof.}}

{{ The definition of $F_k$ in \eqref{Ak} is easy to follow. For comprehending $f_k$ in \eqref{bk}, we explain the construction of $f_1$ and $f_2$. In $\mathbf{S}_1$, we have $f_0-\zeta_{\mathbf{D}}(F_0)$ that needs be repeated $L$ times for $L$ vertices, and so, $f_1=\mathds{1}_{L}\otimes \left(f_0 - \zeta_{\mathbf{D}}\left( F_0 \right)\right)$. For $\mathbf{S}_2$, we need $f_0-\zeta_\mathbf{D}(F_0)-\zeta_{\mathbf{D}}(F_0\phi_{t_1})$ corresponding to each $F_0\phi_{t_1}\phi_{t_0}$ $\forall \phi_{t_1},\;\phi_{t_0}\in\Phi$. Considering the vertex-based definition, we require $L^2$ times repetition of $f_0-\zeta_{\mathbf{D}}(F_0)$, and $L$ times repetition of $\zeta_{\mathbf{D}}\left(F_0\phi^{[i]}\right)$ $\forall i\in\mathbb{I}_1^L$ to obtain $f_2=\mathds{1}_{L^2}\otimes \left(f_0 - \zeta_{\mathbf{D}}\left( F_0  \right)\right)-\left(\mathds{1}_{L} \otimes \zeta_{\mathbf{D}}\left( F_1  \right)\right)=\mathds{1}_L \otimes \left( f_1-\zeta_{\mathbf{D}}(F_1)  \right)$. Following similar steps yields definition \eqref{bk} of $f_k$ $\forall k\in\mathbb{I}_1^\infty$.}}
\subsection{Proof of Lemma \ref{lemma2}}\label{appen2}
Let $\bar{\mathbf{S}}_0\triangleq  {\{} z\in\mathbb{R}^n\; {|} \;||z||_2\leq \min_{i\in\mathbb{I}_1^{n_x+n_u}} f_0(i)/{\bar{F}}  {\}} \text{ and }\bar{\mathbf{S}}_\infty\triangleq {\{} z\in\mathbb{R}^n\; {|}\;||z||_2\leq{f_{\min}}/{\bar{F}} {\}}.$
Since $f_{\min}\leq \min_{i\in\mathbb{I}_1^{n_x+n_u}} f_0(i)$, using \eqref{setZ} and \eqref{defFbar}, we can write $\mathbf{S}_0\supseteq  \bar{\mathbf{S}}_0\supseteq \bar{\mathbf{S}}_\infty$      which guarantees that $\exists\;x\in\mathbf{S}_0$ that satisfies  $||x||_2\geq {f_{\min}}/{\bar{F}}$$\Rightarrow \bar{x}\geq {f_{\min}}/{\bar{F}}\Rightarrow \ln{{f_{\min}}-\ln{\left(\bar{F}\bar{x}\right)}}\leq 0\Rightarrow  \left( \ln{f_{\min}}-\ln{\left(\bar{F}\bar{x}\right)}\right)/\ln{\phi_{\max}}\geq 0$ ($\because \ln{\phi_{\max}}<0$ for $\phi_{\max}<1$).
 
\bibliographystyle{IEEEtran}
\bibliography{IEEEabrv,reference}

\begin{thebibliography}{10}
\providecommand{\url}[1]{#1}
\csname url@rmstyle\endcsname
\providecommand{\newblock}{\relax}
\providecommand{\bibinfo}[2]{#2}
\providecommand\BIBentrySTDinterwordspacing{\spaceskip=0pt\relax}
\providecommand\BIBentryALTinterwordstretchfactor{4}
\providecommand\BIBentryALTinterwordspacing{\spaceskip=\fontdimen2\font plus
\BIBentryALTinterwordstretchfactor\fontdimen3\font minus \fontdimen4\font\relax}
\providecommand\BIBforeignlanguage[2]{{%
\expandafter\ifx\csname l@#1\endcsname\relax
\typeout{** WARNING: IEEEtran.bst: No hyphenation pattern has been}%
\typeout{** loaded for the language `#1'. Using the pattern for}%
\typeout{** the default language instead.}%
\else
\language=\csname l@#1\endcsname
\fi
#2}}

\bibitem{blanchini1999set}
F.~Blanchini, ``Set invariance in control,'' \emph{Automatica}, vol.~35, no.~11, pp. 1747--1767, 1999.

\bibitem{blanchini2008set}
F.~Blanchini, S.~Miani, \emph{et~al.}, \emph{Set-theoretic methods in control}.\hskip 1em plus 0.5em minus 0.4em\relax Springer, 2008, vol.~78.

\bibitem{gilbert1991linear}
E.~Gilbert and K.~Tan, ``Linear systems with state and control constraints: the theory and application of maximal output admissible sets,'' \emph{{IEEE} Trans. Automat. Contr.}, vol.~36, no.~9, pp. 1008--1020, 1991.

\bibitem{yamamoto2015maximal}
K.~Yamamoto and T.~Shitaka, ``Maximal output admissible set for limit cycle controller of humanoid robot,'' in \emph{IEEE Int. Conf. Robot. and Automat. (ICRA)}, 2015, pp. 5690--5697.

\bibitem{weiss2014spacecraft}
A.~Weiss, F.~Leve, M.~Baldwin, J.~R. Forbes, and I.~Kolmanovsky, ``Spacecraft constrained attitude control using positively invariant constraint admissible sets on $\text{SO}(3)\times \mathbb{R}^3$,'' in \emph{Amer. Contr. Conf.}, 2014, pp. 4955--4960.

\bibitem{freire2023systematic}
V.~Freire and M.~M. Nicotra, ``Systematic design of discrete-time control barrier functions using maximal output admissible sets,'' \emph{IEEE Contr. Syst. Lett.}, vol.~7, pp. 1891--1896, 2023.

\bibitem{danielsony2023constraint}
C.~Danielson and T.~Brandt, ``Constraint admissible positive invariant sets for vehicles in {SE}(3),'' \emph{IEEE Contr. Syst. Lett.}, vol.~7, pp. 3759--3764, 2023.

\bibitem{kerrigan2000invariant}
E.~Kerrigan and J.~Maciejowski, ``Invariant sets for constrained nonlinear discrete-time systems with application to feasibility in model predictive control,'' in \emph{IEEE Conf. Decis. and Contr.}, vol.~5, 2000, pp. 4951--4956.

\bibitem{pluymers2005interpolation}
B.~Pluymers, J.~Rossiter, J.~Suykens, and B.~De~Moor, ``Interpolation based {MPC} for {LPV} systems using polyhedral invariant sets,'' in \emph{Amer. Contr. Conf.}, 2005, pp. 810--815.

\bibitem{kolmanovsky1995maximal}
I.~Kolmanovsky and E.~G. Gilbert, ``Maximal output admissible sets for discrete-time systems with disturbance inputs,'' in \emph{Amer. Contr. Conf.}, vol.~3, 1995, pp. 1995--1999.

\bibitem{pluymers2005efficient}
B.~Pluymers, J.~A. Rossiter, J.~A. Suykens, and B.~De~Moor, ``The efficient computation of polyhedral invariant sets for linear systems with polytopic uncertainty,'' in \emph{Amer. Contr. Conf.}, 2005, pp. 804--809.

\bibitem{hirata2008exact}
K.~Hirata and Y.~Ohta, ``Exact determinations of the maximal output admissible set for a class of nonlinear systems,'' \emph{Automatica}, vol.~44, no.~2, pp. 526--533, 2008.

\bibitem{bravo2005computation}
J.~M. Bravo, D.~Lim{\'o}n, T.~Alamo, and E.~F. Camacho, ``On the computation of invariant sets for constrained nonlinear systems: An interval arithmetic approach,'' \emph{Automatica}, vol.~41, no.~9, pp. 1583--1589, 2005.

\bibitem{rachik2002maximal}
M.~Rachik, M.~Lhous, and A.~Tridane, ``On the maximal output admissible set for a class of nonlinear discrete systems,'' \emph{Syst. Anal. Modelling Simul.}, vol.~42, no.~11, pp. 1639--1658, 2002.

\bibitem{benfatah2021maximal}
Y.~Benfatah, A.~El~Bhih, M.~Rachik, and A.~Tridane, ``On the maximal output admissible set for a class of bilinear discrete-time systems,'' \emph{Int. J. Contr., Automat. and Syst.}, vol.~19, pp. 3551--3568, 2021.

\bibitem{hatanaka2008probabilistic}
T.~Hatanaka and K.~Takaba, ``Probabilistic output admissible set for systems with time-varying uncertainties,'' \emph{Syst. \& Contr. Lett.}, vol.~57, no.~4, pp. 315--321, 2008.

\bibitem{hatanaka2008computations}
T.~Hatanaka and K.~Takaba, ``Computations of probabilistic output admissible set for uncertain constrained systems,'' \emph{Automatica}, vol.~44, no.~2, pp. 479--487, 2008.

\bibitem{kouramas2005minimal}
K.~I. Kouramas, S.~V. Rakovic, E.~C. Kerrigan, J.~C. Allwright, and D.~Q. Mayne, ``On the minimal robust positively invariant set for linear difference inclusions,'' in \emph{IEEE Conf. Decis. and Contr.}, 2005, pp. 2296--2301.

\bibitem{blanchini1994ultimate}
F.~Blanchini, ``Ultimate boundedness control for uncertain discrete-time systems via set-induced lyapunov functions,'' \emph{{IEEE} Trans. Automat. Contr.}, vol.~39, no.~2, pp. 428--433, 1994.

\bibitem{hirata2004maximal}
K.~Hirata and Y.~Ohta, ``The maximal output admissible set for a class of uncertain systems,'' in \emph{IEEE Conf. Decis. and Contr.}, vol.~3, 2004, pp. 2686--2691.

\bibitem{kerrigan2001robust}
E.~C. Kerrigan, ``Robust constraint satisfaction: Invariant sets and predictive control,'' Ph.D. dissertation, University of Cambridge UK, 2001.

\bibitem{casavola2000robust}
A.~Casavola, E.~Mosca, and D.~Angeli, ``Robust command governors for constrained linear systems,'' \emph{{IEEE} Trans. Automat. Contr.}, vol.~45, no.~11, pp. 2071--2077, 2000.

\bibitem{khargonekar1990robust}
P.~P. Khargonekar, I.~R. Petersen, and K.~Zhou, ``Robust stabilization of uncertain linear systems: quadratic stabilizability and ${H}^{\infty}$ control theory,'' \emph{{IEEE} Trans. Automat. Contr.}, vol.~35, no.~3, pp. 356--361, 1990.

\bibitem{de1999new}
M.~C. De~Oliveira, J.~Bernussou, and J.~C. Geromel, ``A new discrete-time robust stability condition,'' \emph{Syst. \& Contr. Lett.}, vol.~37, no.~4, pp. 261--265, 1999.

\bibitem{rakovic2005invariant}
S.~V. Rakovic, E.~C. Kerrigan, K.~I. Kouramas, and D.~Q. Mayne, ``Invariant approximations of the minimal robust positively invariant set,'' \emph{{IEEE} Trans. Automat. Contr.}, vol.~50, no.~3, pp. 406--410, 2005.

\bibitem{lorenzen2019robust}
M.~Lorenzen, M.~Cannon, and F.~Allg{\"o}wer, ``Robust {MPC} with recursive model update,'' \emph{Automatica}, vol. 103, pp. 461--471, 2019.

\bibitem{dey2022adaptive}
A.~Dey, A.~Dhar, and S.~Bhasin, ``Adaptive output feedback model predictive control,'' \emph{IEEE Contr. Syst. Lett.}, vol.~7, pp. 1129--1134, 2023.

\bibitem{langson2004robust}
W.~Langson, I.~Chryssochoos, S.~Rakovi{\'c}, and D.~Q. Mayne, ``Robust model predictive control using tubes,'' \emph{Automatica}, vol.~40, no.~1, pp. 125--133, 2004.

\bibitem{perez2011maximal}
E.~P{\'e}rez, C.~Arino, F.~X. Blasco, and M.~A. Mart{\'\i}nez, ``Maximal closed loop admissible set for linear systems with non-convex polyhedral constraints,'' \emph{J. of Process Contr.}, vol.~21, no.~4, pp. 529--537, 2011.

\bibitem{limon2005enlarging}
D.~Limon, T.~Alamo, and E.~F. Camacho, ``Enlarging the domain of attraction of {MPC} controllers,'' \emph{Automatica}, vol.~41, no.~4, pp. 629--635, 2005.

\end{thebibliography}

\end{document}